\numberwithin{equation}{section}
\newtheorem{thm}{Theorem}[section]
\newtheorem{theorem}[thm]{Theorem}
\newtheorem{proposition}[thm]{Proposition}
\newtheorem{lemma}[thm]{Lemma}
\newtheorem{corollary}[thm]{Corollary}
\theoremstyle{definition}
\theoremstyle{remark}
\newtheorem{remark}{Remark}[section]
\DeclareMathOperator{\grad}{grad}
\DeclareMathOperator{\Vol}{Vol}
\DeclareMathOperator{\dVol}{dVol}
\DeclareMathOperator{\re}{Re}
\title{A Uniqueness Theorem for Gluing Special Lagrangian Submanifolds}
\author{Yohsuke Imagi}
\date{}
\address{
Department of Mathematics,
Faculty of Science,
Kyoto University,
Kyoto,
Japan}
\email{imagi@math.kyoto-u.ac.jp}
\thanks{Supported by Grant-in-Aid for JSPS fellows(22-699)}
\begin{document}
\maketitle
\section{Introduction}
\label{introduction}
Let $M_1$, $M_2$ be special Lagrangian submanifolds of a Calabi--Yau manifold $N$, and suppose $M_1$
intersects $M_2$ transversally at a point $P$.
One can construct another special Lagrangian submanifold $M$
by gluing a Lawlor neck~\cite{Lawlor} into $M_1\cup M_2$ at $P$;
see Butscher~\cite{Butscher}, D. Lee~\cite{D. Lee}, Y. Lee~\cite{Y. Lee}, and Joyce~\cite{Joyce}.
By construction, $M$ is close to the Lawlor neck near $P$, and to $M_1\cup M_2$ away from $P$. 
Here is a problem: uniqueness
of a special Lagrangian submanifold
which is close to the Lawlor neck near $P$ and to $M_1\cup M_2$ away from $P$. 
The main result of this paper is a uniqueness theorem in the case
where $M_1$, $M_2$ are flat special Lagrangian tori of real dimension $3$, and $N$ is
a flat complex torus of complex dimension $3$; see Theorem~\ref{uniqueness} for the precise statement.
The author plans to prove a uniqueness theorem for more general $M_1$, $M_2$ in the sequel of this paper.

In section~\ref{statement} we make statement of the key step to the proof of the main result.
Section~\ref{lemma section 1} and Section~\ref{lemma section 2}
provide what we shall need in the proof of Theorem~\ref{theorem}.
In Section~\ref{proof section} we prove Theorem~\ref{theorem} with the
help of results in Section~\ref{lemma section 1} and Section~\ref{lemma section 2}.
In Section~\ref{main result} we state the main result, and prove it
by making a direct use of Theorem~\ref{theorem}.

Theorem~\ref{theorem} is similar to Simon's theorem~\cite[Theorem~5, p563]{Simon},
which was originally applied to the unique tangent cone problem for minimal submanifolds.
The proof of Theorem~\ref{theorem} is almost similar to Simon's.
There is however a significant difference between
Lemma~\ref{extension lemma} and Simon's lemma~\cite[Lemma~3, p561]{Simon}.
The condition~\eqref{energy condition} in Lemma~\ref{extension lemma} is closely related to Hofer's analysis~\cite[pp534--539]{Hofer} of
pseudo-holomorphic curves in symplectizations of contact manifolds. 
\section{Statement of Theorem~\ref{theorem}}
\label{statement}
An $m$-form $\phi$ on a Riemannian manifold $N$ is said to be of comass~$\leq1$ if
\[\phi (v_1,\dots,v_m)\leq 1\]
for every orthonormal vector fields $v_1,\dots,v_m$ on $N$.
For each $m$-form $\phi$ of comass~$\leq1$ on a Riemannian manifold $N$,
a $\phi$-submanifold $M$ of $N$ is defined to be an $m$-dimensional oriented submanifold with volume form $\phi|_M$.
Harvey and Lawson~\cite{Harvey and Lawson} proves that
$\phi$-submanifolds are minimal submanifolds of the Riemannian manifold
if $\phi$ is a closed form of comass~$\leq1$.
A closed form $\phi$ of comass~$\leq1$ is called a calibration on the Riemannian manifold.
A calibration is said to be parallel if it is a parallel differential form on the Riemannian manifold.

Let $\phi$ be a parallel calibration of degree $m>1$ on the Euclidean space $\mathbb{R}^n$.
Set
\begin{equation}
\label{psi}
\psi=\left( \partial_r \lrcorner \phi \right)|_{\mathbb{S}^{n-1}},
\end{equation}
where $\partial_r$ is the vector field in the direction of the radial coordinate $r=|\bullet|$ on $\mathbb{R}^n\setminus\{0\}$,
where $\lrcorner$ is the interior product of vector fields with differential forms,
and where $\mathbb{S}^{n-1}$ is the unit sphere in $\mathbb{R}^n$.
\begin{proposition}
\label{psi comass less than one}
$\psi$ is an $(m-1)$-form of comass~$\leq 1$ on $\mathbb{S}^{n-1}$;
in particular \[\int_X \psi \leq \Vol(X)\] for every compact $(m-1)$-dimensional oriented submanifold $X$ of $\mathbb{S}^{n-1}$.
\end{proposition}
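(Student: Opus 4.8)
The plan is to reduce the comass bound for $\psi$ at a point of $\mathbb{S}^{n-1}$ to the comass bound for $\phi$ at the same point of $\mathbb{R}^n$, and then to deduce the integral inequality by the standard pointwise comparison of a comass~$\leq1$ form with the volume form.

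First I would fix $p\in\mathbb{S}^{n-1}$ and an orthonormal system $v_1,\dots,v_{m-1}$ of tangent vectors in $T_p\mathbb{S}^{n-1}$. Since $T_p\mathbb{S}^{n-1}$ is the orthogonal complement of $p$ in $\mathbb{R}^n$, and since $\partial_r|_p=p$ has unit length, the system $p,v_1,\dots,v_{m-1}$ is orthonormal in $\mathbb{R}^n$. By the definition of the interior product and of the restriction $(\bullet)|_{\mathbb{S}^{n-1}}$ (pullback under the inclusion $\mathbb{S}^{n-1}\hookrightarrow\mathbb{R}^n$) we have
\[\psi(v_1,\dots,v_{m-1})=\phi(p,v_1,\dots,v_{m-1}).\]
Extending $p,v_1,\dots,v_{m-1}$ to a local orthonormal frame of $\mathbb{R}^n$ and using that $\phi$ has comass~$\leq1$, the right-hand side is $\leq1$. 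As $p$ and the $v_i$ were arbitrary, $\psi$ has comass~$\leq1$ on $\mathbb{S}^{n-1}$.

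For the integral statement, let $X$ be a compact oriented $(m-1)$-dimensional submanifold of $\mathbb{S}^{n-1}$. At each $x\in X$ choose a positively oriented orthonormal frame $e_1,\dots,e_{m-1}$ of $T_xX$; then $\psi|_X=\psi(e_1,\dots,e_{m-1})\,\dVol_X$ pointwise, and the scalar factor is $\leq1$ by the comass bound just proved. Integrating over $X$ gives $\int_X\psi\leq\int_X\dVol_X=\Vol(X)$.

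There is no serious obstacle here; the only points needing a little care are the identification $\partial_r|_p=p$ on the unit sphere, and the fact that the pointwise condition $\phi(w_1,\dots,w_m)\leq1$ for orthonormal vectors $w_i\in T_p\mathbb{R}^n$ is equivalent to the stated condition on orthonormal vector fields, which follows by extending a frame at $p$ to a local orthonormal frame. Note also that closedness (the calibration property) and parallelism of $\phi$ are not used in this proposition; only comass~$\leq1$ enters.
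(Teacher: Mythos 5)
Your proof is correct and is essentially the paper's own argument: the comass bound for $\psi$ follows from that of $\phi$ because $\partial_r, v_1,\dots,v_{m-1}$ are orthonormal, and the integral inequality is the standard pointwise comparison. You merely spell out details (the identification $\partial_r|_p=p$ and the integration step) that the paper leaves implicit.
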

\begin{proof}
For every orthonormal vector fields $v_1,\dots,v_{m-1}$ on $\mathbb{S}^{n-1}$,
\[ \psi(v_1,\dots,v_{m-1})=\phi(\partial_r,v_1,\dots,v_{m-1}) \leq 1\]
since $\partial_r,v_1,\dots,v_{m-1}$ are orthonormal.
\end{proof}

Let $b_0$, $b_1$ be real numbers with $b_0<b_1$.
Let $A(b_0,b_1;\mathbb{S}^{n-1})$ be the pre-image of $(b_0,b_1)\times \mathbb{S}^{n-1}$
under the polar coordinates $\mathbb{R}^n\setminus\{0\}\rightarrow (0,\infty)\times \mathbb{S}^{n-1}:a\mapsto (|a|,a/|a|)$.
Let $X$ be a compact submanifold of $\mathbb{S}^{n-1}$,
and $A(b_0,b_1;X)$ be the pre-image of $(b_0,b_1)\times X$ under the polar coordinates. 
\begin{remark}
\label{psi submanifolds}
$X$ is a $\psi$-submanifold
if and only if $A(b_0,b_1;X)$ is a $\phi$-submanifold of $A(b_0,b_1;\mathbb{S}^{n-1})$.
\end{remark}
\begin{proposition}
\label{psi minimal}
$\psi$-submanifolds are minimal submanifolds of $\mathbb{S}^{n-1}$.
\end{proposition}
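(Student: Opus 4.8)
The plan is to reduce the statement to the theorem of Harvey and Lawson~\cite{Harvey and Lawson} recalled above. That theorem cannot be applied to $\psi$ on $\mathbb{S}^{n-1}$ directly, because $\psi$ need not be closed there (working in polar coordinates and using $d\phi=0$ one gets $d\psi=m\,\phi|_{\mathbb{S}^{n-1}}$, which is generally nonzero); so I would pass to the cone. Let $X$ be a $\psi$-submanifold of $\mathbb{S}^{n-1}$ and fix reals $0<b_0<b_1$. By Remark~\ref{psi submanifolds} the truncated cone $A(b_0,b_1;X)$ is a $\phi$-submanifold of $A(b_0,b_1;\mathbb{S}^{n-1})\subset\mathbb{R}^n$; and since $\phi$, being parallel, is in particular a closed form of comass~$\leq1$, the theorem of Harvey and Lawson shows that $A(b_0,b_1;X)$ is a minimal submanifold of $\mathbb{R}^n$. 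The proposition then follows once one knows that the link of a minimal cone is minimal in the sphere.

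For that step I would compute the mean curvature of the cone in terms of that of its link. Parametrise $A(b_0,b_1;X)$ by $F(r,\omega)=r\omega$; the induced metric is the warped product $dr^2+r^2 g_X$, where $g_X$ is the metric induced on $X$, so the volume density is $r^{m-1}$ and the Laplace--Beltrami operator splits as $\Delta=\frac{1}{r^{m-1}}\partial_r\!\big(r^{m-1}\partial_r(\,\cdot\,)\big)+\frac{1}{r^{2}}\Delta_X$. Applying this to the $\mathbb{R}^n$-valued map $F$ and using the Beltrami identity (the mean curvature vector equals $\Delta F$) together with the total umbilicity of $\mathbb{S}^{n-1}$ in $\mathbb{R}^n$, the radial-derivative term cancels the umbilic contribution and one obtains
\[
H_{A(b_0,b_1;X)}=\Delta F=\frac{1}{r}\,H_X,
\]
where $H_X$ is the mean curvature vector of $X$ in $\mathbb{S}^{n-1}$, carried over the cone so as to be dilation-invariant. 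The left-hand side vanishes by the previous paragraph, so $H_X\equiv0$, i.e.\ $X$ is minimal in $\mathbb{S}^{n-1}$.

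The only part that needs care is this cone computation: one checks that the radial line field is geodesic in $\mathbb{R}^n$, that the mixed components of the second fundamental form of the cone vanish, and that the radial-derivative term exactly cancels the trace over $TX$ of the second fundamental form of $\mathbb{S}^{n-1}$ in $\mathbb{R}^n$, leaving only $\frac1r H_X$. Alternatively one can argue variationally: for any compactly supported normal variation $X_t$ of $X$ in $\mathbb{S}^{n-1}$ set $A_t=A(b_0,b_1;X_t)$, so $\Vol(A_t)=\frac{1}{m}(b_1^{\,m}-b_0^{\,m})\Vol(X_t)$ for all $t$; the first-variation formula $\frac{d}{dt}\big|_0\Vol(A_t)=-\int_{A_0}\langle H_{A_0},W\rangle+\int_{\partial A_0}\langle W,\nu\rangle$ has vanishing boundary term, since the variation field $W$ is everywhere tangent to the spheres $|x|=\mathrm{const}$ while the outward conormal $\nu$ of $\partial A_0$ is radial, so minimality of $A_0$ forces $\frac{d}{dt}\big|_0\Vol(X_t)=0$. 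In either approach the input beyond the present section is only the elementary geometry of metric cones, the essential use of $\phi$ being closed having already entered, via Harvey--Lawson, to make the cone minimal.
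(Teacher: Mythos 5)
Your proposal is correct and follows essentially the same route as the paper: reduce via Remark~\ref{psi submanifolds} to the statement that the truncated cone $A(b_0,b_1;X)$ is a $\phi$-submanifold, hence minimal by Harvey--Lawson, and then invoke the equivalence between minimality of the cone and minimality of its link in $\mathbb{S}^{n-1}$. The only difference is that you spell out a proof of that cone--link equivalence (by the warped-product mean-curvature computation or the first-variation argument), whereas the paper simply cites it as a known fact.
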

\begin{proof}This follows from Remark~\ref{psi submanifolds} and the fact that $X$ is a minimal submanifold of $\mathbb{S}^{n-1}$
if and only if $A(b_0,b_1;X)$ is a minimal submanifold of $A(b_0,b_1;\mathbb{S}^{n-1})$.\end{proof}

Let $\nu$ be a normal vector field on $A(b_0,b_1;X)$ in $A(b_0,b_1;\mathbb{S}^{n-1})$.
Set
\begin{align*}
& \|\nu\|_{C^0_\mathrm{cyl}}=\sup_{A(b_0,b_1;X)} |\nu|/r ,\\
& \|\nu\|_{C^1_\mathrm{cyl}}=\sup_{A(b_0,b_1;X)} \bigl( |\nu|/r+|D\nu| \bigr),
\end{align*}
where $r$ is the radial coordinate, and $D\nu$ is the covariant derivative of $\nu$.
These are induced by the cylindrical metric $dr^2/r^2+ds^2$, where $ds^2$ is the metric on $\mathbb{S}^{n-1}$
induced by the Euclidean metric on $\mathbb{R}^n$.
Set
\begin{equation*}
\label{G}
G(\nu)=\Bigl\{ \frac{|a|}{\sqrt{|a|^2+|\nu(a)|^2}}\bigl(a+\nu(a)\bigr) \Bigm| a\in A(b_0,b_1;X) \Bigr\}.
\end{equation*}

The following theorem will be the key step to the proof of the main result;
see the proof of Proposition~\ref{uniqueness1} in Section~\ref{main result}.
\begin{theorem}
\label{theorem}
Let $m$, $n$ be integers with $1<m<n$,
let $\phi$ be a parallel calibration of degree $m$ on $\mathbb{R}^n$,
let $\psi$ be the $(m-1)$-form~\eqref{psi} on $\mathbb{S}^{n-1}$,
let $X$ be a compact $\psi$-submanifold of $\mathbb{S}^{n-1}$,
and let $\beta$ be a positive real number $<1$.
Then, there exist real numbers $\theta=\theta(m,n,X)\in (0,1/2)$, $C=C(m,n,X,\phi)>0$, $\epsilon=\epsilon(m,n,X,\phi,\beta)>0$ such that the following holds:

If $b_0$, $b_1$ are real numbers with $b_0<b_1\beta$, 
if $M$ is a closed $\phi$-submanifold of $A(b_0\beta, b_1;\mathbb{S}^{n-1})$, and if
for each $i=0,1$ there exists a normal vector field $\nu_i$ on $A(b_i\beta, b_i;X)$ in
$A(b_i \beta,b_i;\mathbb{S}^{n-1})$ such that
\begin{align*}
& M\cap A(b_i\beta, b_i;\mathbb{S}^{n-1})=G(\nu_i), \\
& \| \nu _i \|_{C^1_\mathrm{cyl}}\leq \epsilon \text{ for each } i=0,1,
\end{align*}
then there exists a normal vector field $\nu$ on $A(b_0\beta, b_1;X)$ in $A(b_0\beta, b_1;\mathbb{S}^{n-1})$ such that
\begin{align*}
& M= G(\nu) ,\\
& \nu|_{A(b_i\beta, b_i;X)}=\nu _i \text{ for each } i=0,1,\\
& \| \nu \|_{C^1_\mathrm{cyl}}\leq C\epsilon^{\theta}.
\end{align*}
\end{theorem}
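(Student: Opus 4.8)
The plan is to follow Simon's argument, adapted to the calibrated setting: reduce the problem, via cylindrical coordinates, to a decay estimate for a quasilinear elliptic equation over the cone on $X$, and run the \L{}ojasiewicz--Simon machinery; the only genuinely new input is the passage from graphicality near the two ends to graphicality over the whole annulus, which is the content of Lemma~\ref{extension lemma}.

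\textbf{Cylindrical reduction and variational structure.} First I would pass to cylindrical coordinates $t=\log r$, so that $A(b_0\beta,b_1;\mathbb{S}^{n-1})$ becomes a long flat cylinder $(T_0,T_1)\times\mathbb{S}^{n-1}$ with the metric $dr^2/r^2+ds^2$; under this change the cone over $X$ is the product $(T_0,T_1)\times X$ and $G(\nu)$ is its normal graph, with $\|\nu\|_{C^k_{\mathrm{cyl}}}$ the natural cylindrical norm. By Remark~\ref{psi submanifolds}, the hypothesis that $M$ is a $\phi$-submanifold translates, for the graph function, into a second-order quasilinear elliptic equation of the form $\partial_t^2\nu+\mathcal L_X\nu=Q(\nu,D\nu,D^2\nu)$, where $\mathcal L_X$ is the (self-adjoint, Schr\"odinger-type) Jacobi operator of $X\subset\mathbb{S}^{n-1}$ together with the shift coming from the cone structure, and $Q$ is quadratically small in $\|\nu\|_{C^1_{\mathrm{cyl}}}$. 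The variational structure is carried by the functional $\mathcal F(u)=\Vol(G(u))-\int_{G(u)}\psi$ on normal graphs over $X$ in $\mathbb{S}^{n-1}$; by Proposition~\ref{psi comass less than one} this functional is $\geq 0$, and it vanishes together with its first variation at $u=0$ precisely because $X$ is a $\psi$-submanifold (equivalently, the cone over $X$ is a $\phi$-submanifold, Remark~\ref{psi submanifolds}).

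\textbf{Global graphicality.} The next step is to upgrade the two pieces of boundary data into a single normal vector field $\nu$ with $M=G(\nu)$ and $\|\nu\|_{C^1_{\mathrm{cyl}}}$ small over \emph{all} of $A(b_0\beta,b_1;X)$; this is Lemma~\ref{extension lemma}, and it is here that the argument departs from Simon's. I would use that $\phi$ is a parallel form, hence exact on star-shaped domains with an explicit primitive built from $\phi$ and the radial vector field, so that for a $\phi$-submanifold the mass of $M\cap B_\rho$ is controlled by a boundary integral; this yields clean a priori area bounds and a monotone density, and, together with the smallness at the two ends and Allard-type regularity, shows that $M$ stays $C^1$-close to the cone at every intermediate scale, provided its cylindrical energy stays controlled. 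The energy condition~\eqref{energy condition}, which is the analogue in this context of the finite-energy hypothesis in Hofer's analysis of pseudo-holomorphic curves in symplectizations of contact manifolds, is exactly what rules out non-graphical pieces or concentration of mass in the middle of the cylinder; with it in hand, the set of levels over which $M$ is a small normal graph is open, closed and nonempty, hence all of $(T_0,T_1)$, and $\nu|_{A(b_i\beta,b_i;X)}=\nu_i$ follows from uniqueness of graph functions.

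\textbf{Decay and conclusion.} With $M=G(\nu)$ in place I would run the \L{}ojasiewicz--Simon argument. Since $\psi$ has constant coefficients, $\mathcal F$ is real-analytic near $u=0$, so it satisfies a \L{}ojasiewicz--Simon inequality $\mathcal F(u)^{1-\theta}\leq C\|\nabla\mathcal F(u)\|$ for some $\theta=\theta(m,n,X)$, which we may take in $(0,1/2)$ after shrinking it. Feeding this into the differential inequality for $t\mapsto\mathcal F(\nu(t,\cdot))$ along the cylinder --- bounding the $t$-derivative of the energy by $\|\partial_t\nu\|^2$ and the gradient by $\|\partial_t\nu\|$ --- shows that the energy, and then $\nu$ itself in $C^1_{\mathrm{cyl}}$, is dominated by a fixed power of its values at the two ends; because the equation is of second order in $t$ and the data are small at both $t=T_0$ and $t=T_1$, the interior bound is the two-sided one, with the loss of exponent equal to the \L{}ojasiewicz exponent $\theta$. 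Undoing the cylindrical rescaling returns $\|\nu\|_{C^1_{\mathrm{cyl}}}\leq C\epsilon^\theta$. I expect the main obstacle to be the global graphicality step: extracting control of $M$ over the whole long annulus from data only at the two ends forces the calibrated area bounds and the Hofer-type energy condition to be used in tandem, and it is there, rather than in the now-standard \L{}ojasiewicz--Simon part, that the real difficulty lies.
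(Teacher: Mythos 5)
Your overall architecture --- cylindrical coordinates, an extension/compactness lemma driven by the Hofer-type energy condition~\eqref{energy condition} together with Allard's theorems, and a \L{}ojasiewicz--Simon decay estimate --- is the same as the paper's. The genuine gap is in the \emph{order} in which you run the two main ingredients: you first claim global graphicality with uniformly small norm (``the set of levels over which $M$ is a small normal graph is open, closed and nonempty'') using only the calibrated area bounds, the energy condition and Allard regularity, and only afterwards run the \L{}ojasiewicz--Simon argument on $M=G(\nu)$. The openness step cannot be obtained that way. Lemma~\ref{extension lemma} is a compactness statement at a single fixed scale ratio: its hypothesis requires graph norm $\leq\epsilon_{**}$ on the outer band, while its conclusion only gives norm $\leq\epsilon_*$ on the extended band, and $\epsilon_{**}=\epsilon_{**}(\epsilon_*)$ is in general much smaller than $\epsilon_*$. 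Iterating it across an annulus of unbounded modulus $b_1/b_0$ therefore loses control after one step unless some mechanism restores the smallness threshold before the next application.

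The energy condition alone does not provide that mechanism. Since the cylindrical length $\sim\log(b_1/b_0)$ is unbounded while $\int_M d\psi\leq 2c_0\epsilon$ is fixed, $M$ could a priori drift slowly in the middle of the cylinder towards the cone over a rotated copy $X'$ of $X$ (another zero-energy critical point); by Cauchy--Schwarz the total energy cost of such a drift is of order $\mathrm{dist}(X,X')^2/\mathrm{length}$, hence arbitrarily small on long annuli, so it is compatible with all the hypotheses you invoke for the graphicality step, yet it destroys uniform smallness and, for large drift, graphicality over $X$ itself. Excluding exactly this degeneration is what the real-analyticity and the \L{}ojasiewicz--Simon inequality are for, and it is why in the paper the decay estimate (Lemma~\ref{Simon's estimate}, used through Proposition~\ref{S_*}) is interleaved with Lemma~\ref{extension lemma} inside the continuity argument: Proposition~\ref{S_*} plus the interpolation inequality~\eqref{interpolation} shows that on the innermost band the $C^1_{\mathrm{cyl}}$ norm is again $\leq\epsilon_{**}$, which is precisely what permits reapplying the extension lemma and makes $S_*$ open; closedness comes from uniqueness of graph representations~\eqref{graph}. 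Your ``decay and conclusion'' paragraph contains essentially the right estimate (it is the content of Lemma~\ref{Simon's estimate}, including the derivation of the $L^2_{t,x}$ bound on $\partial_t u$ from Proposition~\ref{dpsi} and the energy bound), but it must be fed back into the openness step rather than run only after graphicality has been assumed on the whole annulus; as written, your graphicality step is unsupported for long annuli.
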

Theorem~\ref{theorem} will be proved in Section~\ref{proof section}
with the help of results in Section~\ref{lemma section 1} and Section~\ref{lemma section 2}.

\section{First Lemma for Theorem~\ref{theorem}: Lemma~\ref{extension lemma}}
\label{lemma section 1}
Let $m$, $n$ be integers with $1<m<n$.
Let $r:\mathbb{R}^n\setminus\{0\}\rightarrow (0,\infty)$ be the map $a\mapsto|a|$,
and $s:\mathbb{R}^n\setminus\{0\}\rightarrow \mathbb{S}^{n-1}$ be the map $a\mapsto a/|a|$.
Let $\phi$ be a parallel calibration of degree $m$ on $\mathbb{R}^n$,
and $\psi$ be the $(m-1)$-form~\eqref{psi} on $\mathbb{S}^{n-1}$.
\begin{proposition}
\label{phi=d}
\begin{equation}
\label{phi=dr}
m\phi|_{\mathbb{R}^n\setminus\{0\}}=d(r^ms^*\psi).
\end{equation}
\end{proposition}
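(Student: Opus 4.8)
The plan is to derive the identity from Cartan's magic formula applied to the Euler vector field. Let $E$ be the radial vector field on $\mathbb{R}^n$ with $E_a=a$; on $\mathbb{R}^n\setminus\{0\}$ one has $E=r\,\partial_r$, and $E$ generates the dilations $\delta_t\colon a\mapsto ta$ ($t>0$). Being parallel, $\phi$ has constant coefficients in the linear coordinates, so $\delta_t^*\phi=t^m\phi$; differentiating at $t=1$ gives $\mathcal{L}_E\phi=m\phi$, while $d\phi=0$ since $\phi$ is parallel. Cartan's formula then yields
\[
m\phi=\mathcal{L}_E\phi=d(E\lrcorner\phi)+E\lrcorner d\phi=d(E\lrcorner\phi)
\]
on $\mathbb{R}^n\setminus\{0\}$, so everything reduces to the pointwise identity $E\lrcorner\phi=r^m s^*\psi$ there.

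To prove this I would check that the two $(m-1)$-forms $E\lrcorner\phi$ and $r^m s^*\psi$ on $\mathbb{R}^n\setminus\{0\}$ (i) are both annihilated by $E$, (ii) are both homogeneous of degree $m$ under $\delta_t$, and (iii) have the same pullback under the inclusion $\iota\colon\mathbb{S}^{n-1}\hookrightarrow\mathbb{R}^n$. For (i): $E\lrcorner(E\lrcorner\phi)=0$, and $ds_a(a)=0$ forces $E\lrcorner s^*\psi=0$. For (ii): $\delta_t^*E=E$ gives $\delta_t^*(E\lrcorner\phi)=t^m\,E\lrcorner\phi$, while $\delta_t^*r=tr$ and $s\circ\delta_t=s$ give $\delta_t^*(r^m s^*\psi)=t^m r^m s^*\psi$. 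For (iii): on $\mathbb{S}^{n-1}$ one has $r\equiv1$ and $E$ restricts to $\partial_r$, and $s\circ\iota=\mathrm{id}$, so $\iota^*(E\lrcorner\phi)=\iota^*(\partial_r\lrcorner\phi)=\psi=\iota^*(r^m s^*\psi)$. Finally, an $E$-horizontal form $\omega$ that is $\delta_t$-homogeneous of degree $m$ is determined by $\iota^*\omega$: at $a=tu$ with $u\in\mathbb{S}^{n-1}$, horizontality lets one restrict attention to $d\delta_t\bigl(T_u\mathbb{S}^{n-1}\bigr)\subset T_a\mathbb{R}^n$, and degree-$m$ homogeneity expresses $\omega_a$ on that subspace in terms of $\omega_u$ on $T_u\mathbb{S}^{n-1}$, i.e.\ in terms of $(\iota^*\omega)_u$. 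Thus (i)--(iii) force $E\lrcorner\phi=r^m s^*\psi$, and the proposition follows.

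Alternatively one can compute directly in polar coordinates via the diffeomorphism $f\colon(0,\infty)\times\mathbb{S}^{n-1}\to\mathbb{R}^n\setminus\{0\}$, $(r,u)\mapsto ru$: writing $f^*\phi=dr\wedge\mu_r+\lambda_r$ with $\mu_r,\lambda_r$ pulled back from $\mathbb{S}^{n-1}$, one gets $\mu_r=f^*(\partial_r\lrcorner\phi)$ from $f_*\partial_r=\partial_r$, then $\mu_r=r^{m-1}\psi$ and $\lambda_r=r^m\lambda_1$ from homogeneity of $\phi$, and finally $\lambda_1=\tfrac1m\,d\psi$ by expanding $0=d(f^*\phi)$ and isolating the component containing $dr$; comparison with $d(r^m s^*\psi)=m r^{m-1}\,dr\wedge\psi+r^m\,d\psi$ then gives the result, $f$ being a diffeomorphism. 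In either approach the only non-formal ingredient is the rigidity coming from homogeneity of a constant-coefficient form --- that an $E$-horizontal, degree-$m$-homogeneous form is pinned down by its restriction to $\mathbb{S}^{n-1}$ --- together with the role of $d\phi=0$: in the first route it enters only in the trivial Cartan step, in the second it is what upgrades the tautological equality $\iota^*(\partial_r\lrcorner\phi)=\psi$ on the sphere to the stated global formula. Everything else is mechanical bookkeeping with interior products and pullbacks.
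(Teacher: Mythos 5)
Your proposal is correct and follows essentially the same route as the paper: use homogeneity of the constant-coefficient form to get $\mathcal{L}_{r\partial_r}\phi=m\phi$, apply Cartan's formula, and then identify $r\partial_r\lrcorner\phi$ with $r^m s^*\psi$ by noting it is $\partial_r$-horizontal, behaves correctly under dilations, and restricts to $\psi$ on $\mathbb{S}^{n-1}$ (the paper phrases this by showing $\widetilde{\psi}=r^{1-m}\partial_r\lrcorner\phi$ is horizontal and dilation-invariant, hence $=s^*\psi$). Your alternative polar-coordinate computation is just a more computational variant of the same identity and is not needed.
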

\begin{proof}
Since $\phi$ is an $m$-form with constant components in the Euclidean coordinates on $\mathbb{R}^n$,
the Lie derivative of $\phi$ along $r\partial_r$ is equal to $m\phi$.
Therefore, by Cartan's formula,
\begin{equation*}
\label{mpsi=d}
m\phi=d(r\partial_r\lrcorner \phi).
\end{equation*}
Therefore,
\begin{equation}
\label{psitilde}
 m\phi|_{\mathbb{R}^n\setminus\{0\}}=d(r^m\widetilde{\psi}),
\end{equation}
where $\widetilde{\psi}=r^{1-m}\partial_r\lrcorner (\phi|_{\mathbb{R}^n\setminus\{0\}})$.
Then, $\partial_r \lrcorner \widetilde{\psi}=0$,
and $\widetilde{\psi}$ is invariant under the flow generated by $r\partial_r$.
Therefore, $\widetilde{\psi}=s^*( \widetilde{\psi}|_{\mathbb{S}^{n-1}})$.
On the other hand, $\widetilde{\psi}|_{\mathbb{S}^{n-1}}=\psi$ by ~\eqref{psi}.
Thus, $\widetilde{\psi}=s^{*}\psi$.
Therefore, \eqref{psitilde} gives \eqref{phi=dr}.
\end{proof}
\begin{remark}
\label{s^*psi}
By the calculation above, $s^*\psi=r^{1-m}\partial_r\lrcorner (\phi|_{\mathbb{R}^n\setminus\{0\}})$.
\end{remark}
\begin{proposition}\label{normal to calibrated submanifold}
Let $M$ be a $\phi$-submanifold of $\mathbb{R}^n$. Then,
$(\nu\lrcorner\phi)|_M=0$ if $\nu$ is normal to $M$.
\end{proposition}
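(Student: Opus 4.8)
The plan is to reduce the claim to a pointwise statement on the Grassmannian and then run the standard ``first variation of a calibration'' argument. Fix a point $p\in M$ and set $V=T_pM\subset\mathbb{R}^n$. Since $M$ is a $\phi$-submanifold, $\phi|_V$ is the volume form of $V$, so $\phi(w_1,\dots,w_m)=1$ for every positively oriented orthonormal basis $w_1,\dots,w_m$ of $V$; on the other hand $\phi$ has comass~$\leq 1$, so $\phi(u_1,\dots,u_m)\leq 1$ for every orthonormal $m$-frame $u_1,\dots,u_m$ in $\mathbb{R}^n$. Because $\nu\lrcorner\phi$ is linear in $\nu$ it is enough to treat a unit normal vector $\nu\in V^{\perp}$, and because $(\nu\lrcorner\phi)|_V$ is an $(m-1)$-form on the $m$-dimensional space $V$ it is enough to show that it vanishes on each leave-one-out $(m-1)$-tuple of a fixed positively oriented orthonormal basis $w_1,\dots,w_m$ of $V$; that is, I would show $\phi(w_1,\dots,w_{j-1},\nu,w_{j+1},\dots,w_m)=0$ for every $j$.

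To do this, for fixed $j$ I would rotate $w_j$ towards $\nu$ and set
\[
f_j(t)=\phi\bigl(w_1,\dots,w_{j-1},(\cos t)\,w_j+(\sin t)\,\nu,\,w_{j+1},\dots,w_m\bigr),\qquad t\in\mathbb{R}.
\]
Since $\nu\perp V$ and $|\nu|=1$, the $m$ vectors listed here are orthonormal for every $t$, so $f_j(t)\leq 1$ by the comass bound, while $f_j(0)=\phi(w_1,\dots,w_m)=1$. Hence $f_j$ attains its maximum at $t=0$, so $f_j'(0)=0$, and differentiating the multilinear form $\phi$ gives
\[
0=f_j'(0)=\phi\bigl(w_1,\dots,w_{j-1},\nu,w_{j+1},\dots,w_m\bigr).
\]

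Running this for every $j$ shows that $(\nu\lrcorner\phi)|_V$ vanishes on every leave-one-out subframe of $w_1,\dots,w_m$ and hence $(\nu\lrcorner\phi)|_V=0$; linearity in $\nu$ extends this to an arbitrary normal vector at $p$, and letting $p$ range over $M$ finishes the proof. I do not expect a real obstacle here: the only things to keep track of are that the hypothesis $\nu\perp V$ is exactly what keeps the rotated frames orthonormal (so that the comass bound applies), and the permutation sign relating $\phi(\nu,w_{i_1},\dots,w_{i_{m-1}})$ to the leave-one-out expression above, neither of which affects the conclusion.
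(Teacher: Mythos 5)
Your proposal is correct and uses essentially the same argument as the paper: rotate a tangent direction toward the normal vector, observe the comass bound forces a maximum of $\phi$ at $t=0$, and differentiate to kill the normal contraction. The only cosmetic difference is that you fix a full orthonormal basis of $T_pM$ and check leave-one-out subframes, whereas the paper starts from an arbitrary orthonormal $(m-1)$-frame in $T_pM$ and completes it; the content is identical.
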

\begin{proof}
Suppose $v_1, \dots, v_{m-1}\in T_PM$ are orthonormal.
It suffices to prove:
\begin{equation}
\label{normal zero}
\phi(\nu,v_1,\dots,v_{m-1})=0.
\end{equation}
There exists $v\in T_PM$ such that $\phi(v, v_1, \dots, v_{m-1})=1$.
The function $t\mapsto \phi((\sin t)\nu+(\cos t)v,v_1,\dots,v_{m-1})$ attains maximum $1$ at $t=0$.
Differentiating it at $t=0$, therefore, gives \eqref{normal zero}.
\end{proof}
\begin{proposition}[Harvey and Lawson]
\label{dpsi}
Let $M$ be a $\phi$-submanifold of $\mathbb{R}^n$.
Then,
\begin{equation*}
\label{dpsi=}
\langle \overrightarrow{TM}, d\psi\rangle=|\partial_r \wedge \overrightarrow{TM}|^2mr^{-m},
\end{equation*}
where $\langle \bullet,\bullet\rangle$ is the pairing of $m$-vector fields and $m$-forms,
and $\overrightarrow{TM}$ is the unit simple $m$-vector field dual to the volume form of $M$.
\end{proposition}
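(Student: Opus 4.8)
The plan is to compute the $m$-form $d(s^*\psi)$ explicitly on $\mathbb{R}^n\setminus\{0\}$ and then pair it pointwise with $\overrightarrow{TM}$; the Harvey--Lawson point is that, once this is carried out, the only surviving terms are those forced by the $\phi$-calibrated condition. (I read $\langle\overrightarrow{TM},d\psi\rangle$ as $\langle\overrightarrow{TM},s^*d\psi\rangle=\langle\overrightarrow{TM},d(s^*\psi)\rangle$, with $s^*\psi$ viewed as a form on $\mathbb{R}^n\setminus\{0\}$ restricted to $M$; the factor $r^{-m}$ on the right already records this normalization.)

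First I would rewrite $s^*\psi$: by Remark~\ref{s^*psi} and $r\partial_r\lrcorner\phi=r(\partial_r\lrcorner\phi)$ we have $s^*\psi=r^{-m}(r\partial_r\lrcorner\phi)$. Differentiating this, using $d(r\partial_r\lrcorner\phi)=m\phi$ --- precisely the Cartan-formula computation already made in the proof of Proposition~\ref{phi=d}, valid because $\phi$ is parallel, hence closed --- together with $d(r^{-m})=-mr^{-m-1}dr$, I obtain
\[
d(s^*\psi)=mr^{-m}\bigl(\phi-dr\wedge(\partial_r\lrcorner\phi)\bigr).
\]

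Next, fix $P\in M$ and an orthonormal basis $e_1,\dots,e_m$ of $T_PM$, so that $\overrightarrow{TM}=e_1\wedge\dots\wedge e_m$ and $\langle\overrightarrow{TM},\phi\rangle=\phi(e_1,\dots,e_m)=1$ by the $\phi$-submanifold condition. Write $\partial_r=\partial_r^\top+\partial_r^\perp$ for the decomposition along $M$; since $|\partial_r|=1$ we have $|\partial_r^\perp|^2=1-|\partial_r^\top|^2$. Pairing the displayed identity with $\overrightarrow{TM}$ then reduces the proposition to the single assertion $\langle\overrightarrow{TM},dr\wedge(\partial_r\lrcorner\phi)\rangle=|\partial_r^\top|^2$.

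To prove this, expand $\bigl(dr\wedge(\partial_r\lrcorner\phi)\bigr)(e_1,\dots,e_m)$ as the alternating sum $\sum_i(-1)^{i-1}\langle\partial_r,e_i\rangle\,\phi(\partial_r,e_1,\dots,\widehat{e_i},\dots,e_m)$, and in each last factor substitute $\partial_r=\sum_j\langle\partial_r,e_j\rangle e_j+\partial_r^\perp$. The normal part contributes $\phi(\partial_r^\perp,e_1,\dots,\widehat{e_i},\dots,e_m)=0$ by Proposition~\ref{normal to calibrated submanifold}, and among the tangential parts only the $j=i$ term survives; it makes $\phi(\partial_r,e_1,\dots,\widehat{e_i},\dots,e_m)$ equal to $(-1)^{i-1}\langle\partial_r,e_i\rangle$, so the $i$-th summand of the alternating sum is $\langle\partial_r,e_i\rangle^2$ and the total is $\sum_i\langle\partial_r,e_i\rangle^2=|\partial_r^\top|^2$. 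Consequently $\langle\overrightarrow{TM},d(s^*\psi)\rangle=mr^{-m}(1-|\partial_r^\top|^2)=mr^{-m}|\partial_r^\perp|^2$, and $|\partial_r^\perp|^2=|\partial_r\wedge\overrightarrow{TM}|^2$ because $\partial_r^\top\wedge\overrightarrow{TM}=0$ and $\partial_r^\perp$ is orthogonal to the unit simple $m$-vector $\overrightarrow{TM}$; this is the asserted formula. I do not expect a genuine obstacle here: the only care needed is keeping the interior-product and wedge sign conventions consistent with the proof of Proposition~\ref{psi comass less than one}, while the substantive input is the vanishing supplied by Proposition~\ref{normal to calibrated submanifold}.
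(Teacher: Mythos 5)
Your argument is correct, but it is a genuinely different route from the paper's: the paper does not prove Proposition~\ref{dpsi} at all, it simply cites Harvey and Lawson~\cite[(5.13), Lemma~5.11, p65]{Harvey and Lawson}. What you have written is in effect a self-contained reproduction of that reference's computation using only ingredients already present in the paper: the identity $s^*\psi=r^{1-m}\partial_r\lrcorner\phi$ of Remark~\ref{s^*psi}, the Cartan-formula step $d(r\partial_r\lrcorner\phi)=m\phi$ from the proof of Proposition~\ref{phi=d}, and the vanishing $(\nu\lrcorner\phi)|_M=0$ of Proposition~\ref{normal to calibrated submanifold}. The computation itself checks out: $d(s^*\psi)=mr^{-m}\bigl(\phi-dr\wedge(\partial_r\lrcorner\phi)\bigr)$, the pairing of $dr\wedge(\partial_r\lrcorner\phi)$ with $\overrightarrow{TM}$ equals $|\partial_r^{\top}|^2$ exactly as you expand it (the normal contribution dies by Proposition~\ref{normal to calibrated submanifold}, only $j=i$ survives tangentially), and $|\partial_r^{\perp}|^2=|\partial_r\wedge\overrightarrow{TM}|^2$ since $\overrightarrow{TM}$ is unit simple and $\partial_r^{\perp}$ is orthogonal to $T_PM$. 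Your explicit reading of $\langle\overrightarrow{TM},d\psi\rangle$ as $\langle\overrightarrow{TM},d(s^*\psi)\rangle$ is also the one the paper intends, as the later interchange of $\int_M d\psi$ and $\int_M s^*d\psi$ in Lemma~\ref{extension lemma} and Section~\ref{proof section} shows. What your approach buys is independence from the external reference and a transparent statement of where the calibrated hypothesis enters (through $\phi(e_1,\dots,e_m)=1$ and the normal vanishing); what the paper's citation buys is brevity and the authority of the original source.
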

\begin{proof}
See Harvey and Lawson~\cite[(5.13), Lemma~5.11, p65]{Harvey and Lawson}.
\end{proof}
\begin{corollary}
\label{monotonicity}
Let $M$ be a closed $\phi$-submanifold of $A(b_0,b_1;\mathbb{S}^{n-1})$.
If $b_0<c_0<c_1<b_1$, if $c_0$, $c_1$ are regular values of $r|_M:M\rightarrow I$, and if $r^{-1}(c_0)$, $r^{-1}(c_1)$ are non-empty, then 
\begin{equation}
\label{monotonicity psi}
\int_{M\cap r^{-1}(c_0)}\psi \leq\int_{M\cap r^{-1}(c_1)}\psi.
\end{equation}
\end{corollary}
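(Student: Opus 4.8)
The plan is to integrate Proposition~\ref{dpsi} over the region of $M$ lying between the two level sets and apply Stokes' theorem. Concretely, let $b_0 < c_0 < c_1 < b_1$ with $c_0, c_1$ regular values of $r|_M$, and set $M_{[c_0,c_1]} = M \cap r^{-1}([c_0,c_1])$. Since $M$ is closed in $A(b_0,b_1;\mathbb{S}^{n-1})$ and $c_0, c_1$ lie strictly inside $(b_0,b_1)$, the set $M_{[c_0,c_1]}$ is a compact $m$-dimensional submanifold with boundary, and by Sard's theorem its boundary is the smooth $(m-1)$-manifold $\bigl(M\cap r^{-1}(c_0)\bigr) \cup \bigl(M\cap r^{-1}(c_1)\bigr)$, with orientations induced from that of $M$ so that, with $M\cap r^{-1}(c_1)$ oriented as an outward boundary component and $M\cap r^{-1}(c_0)$ as an inward one,
\[
\partial M_{[c_0,c_1]} = \bigl(M\cap r^{-1}(c_1)\bigr) - \bigl(M\cap r^{-1}(c_0)\bigr).
\]

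Next I would restrict $s^*\psi$ to $M$ and take its exterior derivative. Since $M$ is an $m$-dimensional $\phi$-submanifold, Proposition~\ref{dpsi} gives
\[
d(s^*\psi)|_M = \langle \overrightarrow{TM}, d(s^*\psi)\rangle \,\dVol_M = |\partial_r \wedge \overrightarrow{TM}|^2\, m r^{-m}\, \dVol_M,
\]
where I should be slightly careful that Proposition~\ref{dpsi} as stated refers to $d\psi$ on $\mathbb{S}^{n-1}$; the version I need is the pullback statement $d(s^*\psi)|_M$, which follows from the same computation of Harvey and Lawson applied on $\mathbb{R}^n\setminus\{0\}$, using Remark~\ref{s^*psi} that $s^*\psi = r^{1-m}\,\partial_r\lrcorner(\phi|_{\mathbb{R}^n\setminus\{0\}})$ together with $m\phi|_{\mathbb{R}^n\setminus\{0\}} = d(r^m s^*\psi)$ from Proposition~\ref{phi=d}. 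In any case the right-hand side is pointwise $\geq 0$, so $d(s^*\psi)|_M$ is a non-negative multiple of the volume form of $M$, hence $\int_{M_{[c_0,c_1]}} d(s^*\psi) \geq 0$.

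Finally I would apply Stokes' theorem to the $(m-1)$-form $s^*\psi$ on $M_{[c_0,c_1]}$:
\[
0 \leq \int_{M_{[c_0,c_1]}} d(s^*\psi) = \int_{\partial M_{[c_0,c_1]}} s^*\psi = \int_{M\cap r^{-1}(c_1)} s^*\psi - \int_{M\cap r^{-1}(c_0)} s^*\psi.
\]
On each level set $r^{-1}(c_i)$ the radial coordinate is constant, so the projection $s$ restricted to $M\cap r^{-1}(c_i)$ is, up to the scaling factor $c_i$, an immersion onto a submanifold of $\mathbb{S}^{n-1}$; the pullback $s^*\psi$ integrated over $M\cap r^{-1}(c_i)$ equals $\int_{M\cap r^{-1}(c_i)}\psi$ in the sense intended in the statement (identifying $\psi$ with $s^*\psi$ along the sphere at radius $c_i$, which is the natural reading here). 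This yields \eqref{monotonicity psi}. The one point that requires genuine care — the main obstacle, such as it is — is the orientation and regularity bookkeeping for $M_{[c_0,c_1]}$ as a manifold with boundary: one must invoke Sard to know that generic $c_0, c_1$ are regular values (which is part of the hypothesis), check that $M$ being \emph{closed} in the open annulus forces $M_{[c_0,c_1]}$ to be compact so that Stokes applies with no contribution at "infinity", and fix the sign conventions on the two boundary pieces so that the inequality comes out in the stated direction rather than reversed.
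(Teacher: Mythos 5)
Your proposal is correct and follows the same route as the paper: integrate the nonnegativity of $\langle \overrightarrow{TM}, d\psi\rangle$ from Proposition~\ref{dpsi} over $M\cap r^{-1}([c_0,c_1])$ and apply Stokes' theorem to the boundary level sets. Your extra care about reading $\psi$ as the pullback $s^*\psi$ via Remark~\ref{s^*psi} and about orientations is a reasonable tidying of notational looseness in the paper, not a departure from its argument.
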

\begin{proof}
By Proposition~\ref{dpsi},
\[\int_{M\cap r^{-1}( [c_0,c_1] )}d\psi\geq0.\]
This, by Stokes' theorem, gives \eqref{monotonicity psi}.
\end{proof}
The following lemma will be important in the proof of Theorem~\ref{theorem}; see Section~\ref{proof section}.
\begin{lemma}
\label{extension lemma}
Let $m$, $n$ be integers with $1<m<n$,
let $\phi$ be a parallel calibration of degree $m$ on $\mathbb{R}^n$,
let $\psi$ be the $(m-1)$-form~\eqref{psi} on $\mathbb{S}^{n-1}$,
let $X$ be a compact $\psi$-submanifold of $\mathbb{S}^{n-1}$,
let $\beta$ be a positive real numbers~$<1$,
and let $\epsilon_*$ be a positive real number.
Then, there exists a positive real number $\epsilon_{**}$ such that the following holds:

If $M$ is a closed $\phi$-submanifold of $A(\beta^2,1;\mathbb{S}^{n-1})$, if
\begin{equation}
\label{energy condition}
\int_Md\psi \leq \epsilon_{**},
\end{equation}
and if
there exists a normal vector field $\nu$ on $A(\beta,1;X)$ in $A(\beta,1;\mathbb{S}^{n-1})$ such that
\begin{align*}
&M\cap A(\beta,1;\mathbb{S}^{n-1})=G(\nu), \\
&\| \nu \|_{C^1_\mathrm{cyl}} \leq\epsilon_{**},
\end{align*}
then there exists a normal vector field $\nu'$ on $A(\beta^{3/2},1;X)$ in $A(\beta^{3/2},1;\mathbb{S}^{n-1})$ such that
\begin{align*}
&M= G(\nu'), \\
&\| \nu ' \|_{C^{1,1/2}_{\mathrm{cyl}}(\beta^{3/2} ,\beta)} \leq \epsilon_*,
\end{align*}
where $C^{1,1/2}_\mathrm{cyl}(\beta^{3/2} ,\beta)$ is the H\"older space on $A(\beta^{3/2} ,\beta;X)$
with respect to the metric $dr^2/r^2+ds^2$ on $A(\beta^{3/2} ,\beta;X)$, 
where $r$ is the radial coordinate, and $ds^2$ is the metric on $\mathbb{S}^{n-1}$ induced by the Euclidean metric
on $\mathbb{R}^n$.
\end{lemma}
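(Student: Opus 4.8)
The plan is to run a compactness–and–continuation argument. First I would prove a \emph{closeness} statement by contradiction: there is an $\epsilon_{**}>0$ such that if $M$ is a closed $\phi$-submanifold of $A(\beta^2,1;\mathbb S^{n-1})$ with $\int_M d\psi\le\epsilon_{**}$ and $M\cap A(\beta,1;\mathbb S^{n-1})=G(\nu)$ with $\|\nu\|_{C^1_{\mathrm{cyl}}}\le\epsilon_{**}$, then on the inner region $A(\beta^{3/2},\beta;\mathbb S^{n-1})$, $M$ is a graph $G(\nu')$ over $A(\beta^{3/2},\beta;X)$ with $C^1_{\mathrm{cyl}}$-norm as small as we like. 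Suppose not: take a sequence $M_k$ with the corresponding $\epsilon_{**}=1/k$. Because $\int_{M_k}d\psi\to0$ and $d\psi=|\partial_r\wedge\overrightarrow{TM}|^2 m r^{-m}$ is nonnegative (Proposition~\ref{dpsi}), the $M_k$ become asymptotically invariant under the dilation flow $r\partial_r$; combined with the monotonicity of $\int_{M_k\cap r^{-1}(c)}\psi$ (Corollary~\ref{monotonicity}), which is pinched between the values coming from the graphical outer boundary, the densities at all radii in $[\beta^{3/2},\beta]$ converge to $\Vol(X)$. Passing to a subsequential limit in the cylindrical metric (using standard compactness for minimal submanifolds — $\psi$-submanifolds are minimal by Proposition~\ref{psi minimal} — together with a uniform mass bound from Proposition~\ref{psi comass less than one}), the limit is a $\psi$-submanifold of the cylinder $[\beta^{3/2},\beta]\times\mathbb S^{n-1}$ that is translation-invariant, i.e. of the form $[\beta^{3/2},\beta]\times X'$ for some minimal $X'\subset\mathbb S^{n-1}$, and which agrees with $X$ near the outer end because $\nu_k\to0$ in $C^1_{\mathrm{cyl}}$. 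Hence $X'=X$ and the convergence $M_k\to[\beta^{3/2},\beta]\times X$ is $C^1_{\mathrm{cyl}}$ by elliptic regularity for the minimal submanifold equation, contradicting the failure of the graphical conclusion.

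Once $C^1_{\mathrm{cyl}}$-closeness on the inner region is known, promoting it to the quantitative $C^{1,1/2}_{\mathrm{cyl}}(\beta^{3/2},\beta)$ bound $\le\epsilon_*$ is a routine matter of Schauder estimates: in cylindrical coordinates the graph $\nu'$ over $X$ satisfies a uniformly elliptic quasilinear PDE (the minimal-submanifold equation expressed in the product metric $dr^2/r^2+ds^2$), with coefficients controlled by the already-small $C^1_{\mathrm{cyl}}$-norm; interior Schauder estimates on the slightly larger annulus $A(\beta^2,\beta;\cdot)$ then give the $C^{1,1/2}_{\mathrm{cyl}}$-bound on $A(\beta^{3/2},\beta;\cdot)$, with the constant absorbed by shrinking $\epsilon_{**}$ further. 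The only genuinely non-formal point is that the graphical description must be propagated from the region $A(\beta,1;\cdot)$ where it is assumed, inward to $A(\beta^{3/2},\beta;\cdot)$; this is exactly what the contradiction argument above delivers, since in the limit there is no "extra sheet" — the mass on every slice converges to $\Vol(X)$, so no multiplicity or bubbling can occur.

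I expect the main obstacle to be the compactness step, and specifically justifying that the limit is a \emph{smooth} translation-invariant submanifold rather than a singular minimal current. This is where the hypothesis $\int_M d\psi\le\epsilon_{**}$ is doing the real work, as the introduction hints: it is an energy smallness condition (analogous to the area/energy bounds in Hofer's analysis of pseudo-holomorphic curves in symplectizations) that forces the limiting object to be a cylinder $\mathbb R\times X$ with no energy concentration, hence allowing the uniform $C^1_{\mathrm{cyl}}$ control that a bare minimal-surface compactness theorem would not provide. Making the no-concentration argument precise — quantifying how small $\int_M d\psi$ forces $M$ to be close to dilation-invariant on each dyadic scale and ruling out necking between the scales $\beta^2$, $\beta^{3/2}$, $\beta$, $1$ — is the technical heart of the lemma, and is presumably where Lemma~\ref{extension lemma} departs from Simon's Lemma~3.
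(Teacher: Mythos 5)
Your proposal takes essentially the same route as the paper's proof: argue by contradiction, obtain a uniform mass bound (in the paper via the identity $m\phi=d(r^{m}s^{*}\psi)$ together with slice monotonicity, rather than the comass bound alone), extract a varifold limit by Allard's compactness theorem, show the limit is dilation-invariant using the smallness of $\int_{M}d\psi$ (a first-variation-in-scale computation estimated by Cauchy--Schwarz against $\sqrt{\smash[b]{\int_{M}d\psi}}$), identify the limit with the cone over $X$ from the outer graphical region, and contradict the assumed failure of graphicality on the inner annulus. The two points you explicitly leave open are exactly where the paper does its work: the no-concentration/dilation-invariance step is the scaling-invariance computation just described, and the upgrade from varifold convergence to $C^{1,1/2}$ graphical convergence is Allard's regularity theorem rather than plain elliptic regularity (which cannot be invoked until graphicality over the limit is already known).
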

\begin{proof}
Suppose there does not exist such $\epsilon_{**}$.
Then, there exists a sequence $(M_i)_{i=2,3,4,\dots}$ of closed $\phi$-submanifolds of
$A(\beta^2,1;\mathbb{S}^{n-1})$
with the following properties:
\begin{itemize}
\item[(P1)] $\int_{M_i}d\psi \leq 1/i$;
\item[(P2)] for each $i=2,3,4,\dots$, there exists a normal vector field $\nu_i$ on $A(\beta,1;X)$ in $A(\beta,1;\mathbb{S}^{n-1})$ such that
\begin{align*}
& M_i\cap A(\beta,1;\mathbb{S}^{n-1})=G(\nu_i ),\\
& \| \nu_ i \|_{C^1_\mathrm{cyl}}\leq 1/i;
\end{align*}
\item[(P3)] for each $i=1,2,\dots$, there does not exist any normal vector field $\nu_i'$ on $A(\beta^{3/2},\beta;X)$ in $A(\beta^{3/2},\beta;\mathbb{S}^{n-1})$ such that
\begin{align*}
& M_i= G(\nu_i'), \\
& \| \nu _i' \|_{C^{1,1/2}_\mathrm{cyl}}\leq \epsilon_*.
\end{align*}
\end{itemize}
By Proposition~\ref{phi=d} and (P1),
\begin{align*}
 \Vol(M_i)
&=\int_{M_i}\phi\\
&=\int_{M_i}m^{-1}d(r^ms^*\psi)\\
&=\int_{M_i}r^{m-1}dr\wedge s^*\psi+ m^{-1}r^m d\psi \\
&\leq  \left(\int_{M_i}dr\wedge s^*\psi\right)+ (mi)^{-1},
\end{align*}
whereas by Proposition~\ref{monotonicity} and (P2),
\begin{align*}
\int_{M_i}dr\wedge s^*\psi 
&=\int_{ \beta^2 }^{1} db\int_{M_i\cap r^{-1}(b)} s^*\psi\\
&\leq \int_{ \beta^2}^{1} db\limsup_{b\to \rho} \int_{M_i\cap r^{-1}(b)} s^*\psi\\
&\leq \left( 1-\beta^2 \right) \limsup_{b\to \rho} \int_{M_i\cap r^{-1}(b)} s^*\psi\\
&\leq C
\end{align*}
for some $C$ independent of $i=2,3,4,\dots$.
Therefore,
\begin{equation}
\label{boundedness}
 \sup_{i=2,3,4,\dots}\Vol(M_i)<\infty.
\end{equation}
Therefore, by Allard's compactness~\cite[Theorem 5.6]{Allard}, there exists a subsequence $(M_{i_j})_{j=2,3,4,\dots}$ such that
$(M_{i_j})_{j=2,3,4,\dots}$ converges as varifolds to some rectifiable varifold $M_{\infty}$ in $A(\beta^2,1;\mathbb{S}^{n-1})$.
Let $\|M_{\infty}\|$ denote the Radon measure on $A(\beta^2,1;\mathbb{S}^{n-1})$ induced by $M_{\infty}$.
For each $\|M_{\infty}\|$-measurable subset $E$ of $A(\beta^2,1;\mathbb{S}^{n-1})$,
let $\|M_{\infty}\|\llcorner E$ denote the restriction of $\|M_{\infty}\|$ to $E$, i.e.,
\[ \|M_{\infty}\|\llcorner E(E')=\|M_{\infty}\| (E\cap E').\]
\begin{proposition}
\label{scaling invariance}
$a^m\|M_{\infty}\|\llcorner (a^{-1}E)=\|M_{\infty}\|\llcorner E$ for every
$\|M_{\infty}\|$-measurable subset $E$ of $A(\beta^2,1;\mathbb{S}^{n-1})$ and every positive real number $a$
with $aE\subset A(\beta^2,1;\mathbb{S}^{n-1})$. 
\end{proposition}
\begin{proof}
By an approximation argument, it suffices to prove the following:
\begin{equation}
\label{approximation by smooth function}
\frac{d}{da}a^{m}\int_{(r,s)\in A(\beta^2,1;\mathbb{S}^{n-1})}f(ar)g(s) d\|M_{\infty}\| =0
\end{equation}
for every $f\in C_{\mathrm{c}}^{\infty}(\beta^2,1)$ with $a(\mathrm{supp}{f})\subset (\beta^2,1)$ and every $g\in C^{\infty}(\mathbb{S}^{n-1})$.
By (P1),
\begin{equation}
\label{11111111}
\begin{split}
&\text{the left-hand side of \eqref{approximation by smooth function}}\\
&=\frac{d}{da}\lim_{j\to \infty}
a^{m}\int_{M_{i_j}}f(ar)g \dVol(M_{i_j}) \\
&=\lim_{j\to \infty}
\int_{M_{i_j}}\frac{d}{da}((ar)^m f(ar))dr/r\wedge gs^*\psi \\
&=\lim_{j\to \infty}
\int_{M_{i_j}}a^{-1}\frac{d}{dr}((ar)^{m}f(ar)) dr\wedge gs^*\psi \\
&=\lim_{j\to \infty}
-\int_{M_{i_j}}a^{-1}(ar)^{m}f(ar) dg\wedge s^*\psi.
\end{split}
\end{equation}
By Remark~\ref{s^*psi}, Proposition~\ref{normal to calibrated submanifold}, Proposition~\ref{dpsi} and
\eqref{boundedness},
\begin{equation}
\label{22222222}
\begin{split}
&\lim_{j\to \infty}
-\int_{M_{i_j}}a^{-1}(ar)^{m}f(ar) dg\wedge s^*\psi \\
&=\lim_{j\to \infty}
\int_{M_{i_j}}a^{-1}(ar)^{m}f(ar) r^{1-m}\partial_r\lrcorner (dg\wedge \phi),\\
&=\lim_{j\to \infty}
\int_{M_{i_j}}a^{-1}(ar)^{m}f(ar) r^{1-m}\langle \mathrm{pr}_{TM_{i_j}^{\perp}}\partial_r, dg \rangle \phi \\
&\leq \lim_{j\to \infty}
a^{m-1}\sup|f|\sqrt{\int_{M_{i_j}} mr^{-m} |\mathrm{pr}_{TM_{i_j}^{\perp}}\partial_r|^2 \phi}
\sqrt{\int_{M_{i_j}}|dg|^2 \phi}\\
&\leq \lim_{j\to \infty}\mathrm{const.}\sqrt {\int_{M_{i_j}} d\psi}\\
&=0,
\end{split}
\end{equation}
where $\mathrm{pr}_{TM_{i_j}^{\perp}}$ is the projection of $\mathbb{R}^n$ onto the normal bundle
of $M_{i_j}$ in $\mathbb{R}^n$.
By \eqref{11111111} and \eqref{22222222}, the left-hand side of
\eqref{approximation by smooth function} is zero. 
This completes the proof of Proposition~\ref{scaling invariance}.
\end{proof}
By (P2), on the other hand, $\|M_{\infty}\|\llcorner A(\beta,1;\mathbb{S}^{n-1})$ is equal to
$\|A(\beta,1;X)\|$ as Radon measures on $A(\beta^2,1;\mathbb{S}^{n-1})$, where
\[\|A(\beta,1;X)\|(E)=\Vol ( A( \beta,1,;X ) \cap E).\]
Therefore, by Proposition~\ref{scaling invariance}, $M_{\infty}=A(\beta^2,1;X)$ as varifolds
in $A(\beta^2,1;\mathbb{S}^{n-1})$.
Therefore, $(M_{i_j})_{j=2,3,4,\dots}$ converges to a submanifold $A(\beta^2,1;X)$ as varifolds
in $A(\beta^2,1;\mathbb{S}^{n-1})$.
Therefore, by Allard's regularity~\cite[Theorem 8.19]{Allard}, $(M_{i_j})_{j=2,3,4,\dots}$ converges to a submanifold $A(\beta^2,1;X)$ in the local $C^{1,1/2}$-sense.
This contradicts (P3), which completes the proof of Lemma~\ref{extension lemma}.
\end{proof}
\section{Second Lemma for Theorem~\ref{theorem}: Lemma~\ref{Simon's estimate}}
\label{lemma section 2}
Let $X$ be a compact smooth Riemannian manifold,
and $V$ be a smooth real vector bundle over $X$ with a fibre metric and a metric connection. 
Let $C^{\infty}_x$ denote the space of all smooth sections of $V$ over $X$.
For every $v,v'\in C^{\infty}_x$, set
\begin{align*}
& (v,v')_{L^2_x}=\int _X \bigl( v(x),v'(x)\bigr) dx, \\
& \|v\|_{L^2_x}=\sqrt{ \int _X \bigl( v(x),v(x)\bigr) dx},
\end{align*}
where $\bigl( v(x),v'(x)\bigr)$ is the inner product on the fibre $V|_x$ over $x\in X$.
Let $F:C^{\infty}_x\rightarrow \mathbb{R}$ be a functional of the following form:
\begin{equation}
\label{multiple integral}
 Fv=\int_X f(x,v,D_x v )dx
\end{equation}
for every $v\in C^{\infty}_x$ with covariant derivative $D_xv$,
where $f=f(x,v,p)$ is a $\mathbb{R}$-valued smooth function of $x\in X$, $v\in V|_x$, and $p\in T^*_xX\otimes V|_x$.
Suppose $f$ satisfies the following conditions:
\begin{itemize}
\item[(C1)] $(v,p)\mapsto f(x,v,p)$ is a real-analytic function on the vector space $V|_x \oplus \left( T^*_xX\otimes V|_x \right)$ for every $x\in X$;
\item[(C2)] $F$ satisfies the Legendre-Hadamard condition at $0\in C^{\infty}_x$, i.e.,
\[ \frac{d^2}{dh^2}f(x,0,h^2\xi\otimes v)\Big|_{h=0}>c|\xi|^2|v|^2\]
for every $x\in X, \xi\in T_x^*X, v\in V|_x$ for some $c>0$.
\end{itemize}
Let $-\grad F:C^{\infty}_x\rightarrow C^{\infty}_x$ be the Euler--Lagrange operator of $F$, i.e.,
\[ \bigl( \grad F(v),v' \bigr)_{L^2_x}=\frac{d}{dh}F(v+hv')\Big|_{h=0}\]
for every $v,v'\in C^{\infty}_x$.
Suppose \[\grad F(0)=0,\] where $0\in C^{\infty}_x$.

Let $t_0$, $t_{\infty}$ be real numbers with $t_0<t_{\infty}$.
Consider the product $(t_0,t_{\infty})\times X$,
and the pull-back $\mathrm{pr}_2^*V$ over $(t_0,t_{\infty})\times X$,
where $\mathrm{pr}_2^*$ is the projection of $(t_0,t_{\infty})\times X$ onto $X$.
Let $C^{\infty}_{t,x}(t_0,t_{\infty})$ denote the space of
all smooth sections of $\mathrm{pr}_2^*V$ over $(t_0,t_{\infty})\times X$.
For every $u=u(t,x)\in C^{\infty}_{t,x}(t_0,t_{\infty})$,
set $u(t)=u(t,\bullet) \in C^{\infty}_x$ for each $t\in (t_0,t_{\infty})$.
For every $u=u(t,x)\in C^{\infty}_{t,x}(t_0,t_{\infty})$ and for every non-negative integers $k$, let $\|u\|_{C^{k,\mu}_{t,x}(t_0,t_{\infty})}$ be the H\"older norm with respect to the product metric $dt^2+dx^2$ on $(t_0,t_{\infty})\times X$. 
For every $u=u(t,x)\in C^{\infty}_{t,x}(t_0,t_{\infty})$, set 
\[\|u\|_{L^2_{t,x}(t_0,t_{\infty})}=\sqrt{\int_{t_0}^{t_{\infty}}\|u(t)\|_{L^2_x}^2dt}.\]
\begin{theorem}[Simon]
\label{Lojasiewicz-Simon}
There exist real numbers $\delta_0>0$, $\theta \in (0,1/2)$ depending only on $X$, $V$, $F$
such that if $t_3,t_4\in (t_0,t_{\infty})$ with $t_3<t_4$, if $u\in C^{\infty}_{t,x}(t_0,t_{\infty})$, if $\delta>0$, and if for each $t\in [t_3,t_4]$,
\begin{align*}
& \|u(t)\|_{C^{2,1/2}_x} \leq \delta_0,\\
& F(0)-F\bigl( u(t)\bigr) \leq \delta,\\
& \|\partial_t u(t)+\grad F\bigl(u(t)\bigr) \|_{L^2_x}\leq (3/4)\|\partial_t u(t)\|_{L^2_x} ,
\end{align*}
then
\begin{equation*}
\label{stability inequality}
\int_{t_3}^{t_4}\|\partial_t u(t)\|_{L^2_x}dt
\leq (4/\theta) \bigl( \bigl|  F\bigl( u(t_3)\bigr)-F(0)  \bigr|^{\theta} + \delta ^{\theta} \bigr).
\end{equation*}
\end{theorem}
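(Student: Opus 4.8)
The plan is to follow the Łojasiewicz--Simon strategy: combine an infinite-dimensional Łojasiewicz gradient inequality for the functional $F$ near the critical point $0$ with a monotonicity/differential-inequality argument along the flow variable $t$. First I would establish the Łojasiewicz inequality: there are constants $\sigma>0$ and $\theta\in(0,1/2)$, depending only on $X$, $V$, $F$, such that
\[
\bigl|F(v)-F(0)\bigr|^{1-\theta}\leq \sigma\,\|\grad F(v)\|_{L^2_x}
\]
whenever $\|v\|_{C^{2,1/2}_x}\leq\delta_0$. This is exactly Simon's analytic lemma; its proof uses condition~(C1) (real-analyticity of the integrand in $(v,p)$), condition~(C2) (the Legendre--Hadamard condition, which makes the linearized Euler--Lagrange operator $L=d\,\grad F(0)$ elliptic), and $\grad F(0)=0$. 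One reduces to a finite-dimensional Łojasiewicz inequality by Lyapunov--Schmidt decomposition onto $\ker L$ (finite-dimensional, since $L$ is elliptic and self-adjoint on the compact $X$): write $v=w+\zeta$ with $\zeta\in\ker L$ and $w\perp\ker L$, solve the $w$-component via the implicit function theorem using invertibility of $L$ on $(\ker L)^\perp$ (with Schauder estimates to stay in the Hölder category), and apply the classical Łojasiewicz inequality to the real-analytic function $\zeta\mapsto F(w(\zeta)+\zeta)$ on the finite-dimensional space.

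Granting the Łojasiewicz inequality, the second step is the flow argument. Define $H(t)=\bigl(F(0)-F(u(t))\bigr)^{\theta}$, which by the energy bound $F(0)-F(u(t))\leq\delta$ is well-defined (one may assume $F(0)-F(u(t))\geq0$, splitting off the harmless case where it is negative). Then
\[
-\frac{d}{dt}H(t)=\theta\,\bigl(F(0)-F(u(t))\bigr)^{\theta-1}\,\Bigl(-\frac{d}{dt}F(u(t))\Bigr),
\]
and by definition of $\grad F$ we have $\tfrac{d}{dt}F(u(t))=(\grad F(u(t)),\partial_t u(t))_{L^2_x}$. The third hypothesis $\|\partial_t u(t)+\grad F(u(t))\|_{L^2_x}\leq(3/4)\|\partial_t u(t)\|_{L^2_x}$ gives, by the triangle inequality, both $\|\grad F(u(t))\|_{L^2_x}\geq(1/4)\|\partial_t u(t)\|_{L^2_x}$ and a lower bound $-(\grad F(u(t)),\partial_t u(t))_{L^2_x}\geq c_0\|\partial_t u(t)\|_{L^2_x}^2$ for a fixed $c_0>0$ (the standard ``pairing is almost negative-definite'' computation: expand $\|\partial_t u+\grad F\|^2\leq(9/16)\|\partial_t u\|^2$). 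Combining these with the Łojasiewicz inequality to bound $\bigl(F(0)-F(u(t))\bigr)^{\theta-1}\geq\sigma^{-1}\|\grad F(u(t))\|_{L^2_x}^{-1}$ yields
\[
-\frac{d}{dt}H(t)\;\geq\;\mathrm{const.}\;\|\partial_t u(t)\|_{L^2_x}.
\]
Integrating from $t_3$ to $t_4$ gives $\int_{t_3}^{t_4}\|\partial_t u(t)\|_{L^2_x}\,dt\leq\mathrm{const.}\,(H(t_3)-H(t_4))\leq\mathrm{const.}\,H(t_3)\leq\mathrm{const.}\,\bigl|F(u(t_3))-F(0)\bigr|^{\theta}$; absorbing the earlier sign-case where $F(0)-F(u(t))<0$ (there one bounds directly by $\delta^{\theta}$) produces the stated form with the two terms $\bigl|F(u(t_3))-F(0)\bigr|^{\theta}+\delta^{\theta}$, and one checks the constant can be taken to be $4/\theta$ after fixing the relation between $\sigma$, $c_0$ and $\theta$.

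The main obstacle is the Łojasiewicz inequality itself in this geometric, bundle-valued setting. The delicate points are: keeping everything in the Hölder space $C^{2,1/2}_x$ so that the implicit function theorem and elliptic (Schauder) estimates apply with the right function-space topology; verifying that condition~(C2) indeed forces the linearization $L$ to be a (formally self-adjoint) elliptic operator so that $\ker L$ is finite-dimensional and $L$ has closed range with a bounded right inverse on the complement; and handling the $x$-dependence and the curvature of $V$ in the real-analyticity hypothesis~(C1), so that the reduced function on $\ker L$ is genuinely real-analytic. Since the theorem is attributed to Simon, I would cite~\cite[Theorem~3, p.~561]{Simon} for the core analytic statement and only sketch how the hypotheses~(C1),~(C2) and $\grad F(0)=0$ feed into it, then carry out the short flow-integration argument above in full.
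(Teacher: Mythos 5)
Your proposal takes essentially the same route as the paper, whose proof simply cites Simon's Theorem~3 (the \L{}ojasiewicz gradient inequality, obtained by exactly the Lyapunov--Schmidt reduction you sketch, using (C1), (C2) and $\grad F(0)=0$) together with Simon's Lemma~1, p.~542, for the flow-integration step you carry out. The only quibble is bookkeeping in that last step: one should integrate the signed quantity $\mathrm{sign}\bigl(F(u(t))-F(0)\bigr)\,\bigl|F(u(t))-F(0)\bigr|^{\theta}$, which is non-increasing in $t$ since $F(u(t))$ decreases, so that the term $\bigl|F(u(t_3))-F(0)\bigr|^{\theta}$ controls the portion where $F(u)>F(0)$ and $\delta^{\theta}$ the portion where $F(u)<F(0)$; your $H(t)=\bigl(F(0)-F(u(t))\bigr)^{\theta}$ is actually non-decreasing (so $H(t_3)-H(t_4)\le 0$ as written, and the roles of the two terms are swapped), but this sign slip is cosmetic and does not affect the validity of the argument.
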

\begin{proof}
The Simon condition implies the \L{}ojasiewicz inequality in the sense of Simon~\cite[Theorem~3]{Simon},
which leads to Theorem~\ref{Lojasiewicz-Simon}; see Simon~\cite[Lemma~1, p542]{Simon}.
\end{proof}
Simon~\cite{Simon} studies smooth functions $u=u(t,x)\in C^{\infty}_{t,x}(t_0,t_{\infty})$ satisfying the partial differential equation
\begin{equation}
\label{Simon's equation}
 \partial_t^2u-\partial_tu-\grad F(u)+R(u,\partial_tu,\partial_t^2u) =0,
\end{equation}
where $R:C^{\infty}_x\times C^{\infty}_x\times C^{\infty}_x\rightarrow C^{\infty}_x$ is a remainder of the following form:
\begin{equation}
\label{remainder}
\begin{split}
 R( v,v^{(1)},v^{(2)} )=& \bigl(A(x,v,D_x v,v^{(1)}) D_x^2 v \bigr)v^{(1)}\\ & +\sum_{(k,l)=(0,1),(1,1),(0,2)}B_{kl}(x,v,D_x v,v^{(1)})D_x^{l}v^{(k)}
\end{split}\end{equation}
for every $v,v^{(1)},v^{(2)}\in C^{\infty}_x$, where $A=A(x,v,p,q)$, $B_{kl}=B_{kl}(x,v,p,q)$ are smooth functions of $x\in X$, $v\in V|_x$, $p\in T^*_xX\otimes V|_x$, $q\in V|_x$ with
$A(x,v,p,q)\in \bigotimes ^2 T_xX\otimes V|_x^*$, $B_{kl}(x,v,p,q)\in \bigotimes ^l T_xX$, $B_{kl}(x,0,0,0)=0$ for every $x\in X$, for every $(k,l)=(1,0),(1,1),(2,0)$.
\begin{remark}
\label{remainder condition}
Let $R$ be of the form~\eqref{remainder} with $A$, $B$ as above.
Then, for every $C_2>0$ there exists $\delta_5=\delta_5(X,V,F,R,C_2)>0$ such that
if $\|u\|_{C^{1,1/2}_{t,x}}\leq \delta_5$, then
\begin{equation*}
\begin{split}
&|R(u(t), \partial_t u(t), \partial_t^2 u(t)) |\\
&\leq \frac{1}{16} |\partial_t u(t)| + \frac{1}{16}  |\partial_tu(t)| +\frac{1}{16C_2} |D_x\partial_t u(t)|+ \frac{1}{8} |\partial_t^2 u(t)|.
\end{split}
\end{equation*}
\end{remark}
\begin{remark}
\label{minimal submanifolds equation}
Suppose $X$ is a minimal submanifold of $\mathbb{S}^{n-1}$,
and $V$ is the normal bundle of $X$ in $\mathbb{S}^{n-1}$.
For each normal vector field $v$ on $X$ in $\mathbb{S}^{n-1}$, set
\[ G(v)=\Bigl\{ \frac{|x|}{\sqrt{|x|^2+|v(x)|^2}}\bigl(x+v(x)\bigr) \Bigm| x\in X \Bigr\},\]
which is a compact submanifold of $\mathbb{S}^{n-1}$
if $\sup_{x\in X}|v(x)|\leq \epsilon_0$ for some $\epsilon_0>0$ depending only on $X\subset \mathbb{S}^{n-1}$.
Let $C^{\infty}_x$ be the space of all normal vector fields $v$ on $X$ in $\mathbb{S}^{n-1}$ with $\sup_{x\in X}|v(x)|\leq \epsilon_0$.
Let $F:C^{\infty}_x\rightarrow \mathbb{R}$ be the following functional:
\begin{equation}
\label{F}
F(v)=\Vol \left(G(v)\right).
\end{equation}
$F$ is of the form~\eqref{multiple integral} with integrand $f$ satisfying (C1), (C2), and $\grad F(0)=0$.
Set $b_0=e^{-t_{\infty}/m}$, $b_1=e^{-t_0/m}$; $b_0<b_1$.
Let $A(b_0,b_1;\mathbb{S}^{n-1})$, $A(b_0,b_1;X)$ be as in Section~\ref{statement}.
Let $\nu$ be a normal vector field on $A(b_0,b_1;X)$ in $A(b_0,b_1;\mathbb{S}^{n-1})$.
Set $u(t,x)=e^{t/m}\nu(e^{-t/m}x)$; $u\in C^{\infty}_{t,x}(t_0,t_{\infty})$.
Let $G(\nu)$ be as in Section~\ref{statement}.
If $G(\nu)$ is a minimal submanifold,
then $u$ satisfies Simon's equation~\eqref{Simon's equation},
where $F$ is the functional~\eqref{F}, and $R$ is some remainder of the form~\eqref{remainder} depending only on $m$, $n$, $X$. 
\end{remark}
Let $H:C^{\infty}_x\rightarrow C^{\infty}_x$ be the linearized operator of $\grad F$ at $0\in C^{\infty}_x$.
Simon's equation~\eqref{Simon's equation} is of the following form:
\begin{equation*}
\partial_t^2u-\partial_tu-Hu=\sum_{0\leq k+l\leq 2}E_{kl}(x,u,D_xu,\partial_tu)D_x^l\partial_t^ku,
\end{equation*}
where $E_{kl}=E_{kl}(x,u,p,q)$ are smooth functions of $x\in X$, $v\in V|_x$, $p\in T^*_xX\otimes V|_x$, $q\in V|_x$ with
$E_{kl}(x,v,p,q)\in \bigotimes ^l T_xX$, $E_{kl}(x,0,0,0)=0$
for every $x\in X$, for every $k,l$ with $0\leq k+l\leq2$.
\begin{remark}
\label{Schauder?}
Let $t_1$, $t_6$ be real numbers with $t_1<t_6$.
There exists $\delta_2=\delta_2(X,V,F,R)>0$ such that if $u\in C^{\infty}_{t,x}$
with $\|u\|_{C^{1,1/2}_{t,x}(t_1,t_6)}\leq \delta_2$, then
\[\max_{0\leq k+l\leq2}\|E_{kl}\|_{C^{0,1/2}_{t,x}(t_1,t_6)}\leq \delta_1,\]
where $\delta_1=\delta_1(X,V,F)$ is as in Remark~\ref{Schauder} below.
\end{remark}
\begin{remark}
\label{Schauder}
Let $T>0$.
The Legendre-Hadamard condition~(C2) implies that 
$\partial_t^2-\partial_t-H$ is uniformly elliptic on $C^{\infty}_{t,x}(t_0,t_{\infty})$.
Therefore, there exists $\delta_1=\delta_1(X,V,F)>0$ be such that
if $w \in C^{\infty}_{t,x}(-T/3,T/3)$, if $a_{kl}(t,x)\in C^{\infty}_{t,x}(-T/3,T/3)$, and if
\begin{align*}
&\partial_t^2w-\partial_tw-Hw=\sum_{0\leq k+l\leq2}{a}_{kl}(t,x) D_x^l\partial_t^k w,\\
&\max_{0\leq k+l\leq2}\|a_{kl}\|_{C^{0,1/2}_{t,x}(-T/3,T/3)}\leq \delta_1,
\end{align*}
then,
\begin{equation}
\|w\|_{C^{2,1/2}_{t,x}(-T/5,T/5)}\leq C_1\|w\|_{L^2_{t,x}(-T/4,T/4)}
\end{equation}
for some $C_1=C_1(X,V,F,T)>0$.
This is a Schauder estimate; see Morrey~\cite[Chapter~6]{Morrey}.
\end{remark}
\begin{theorem}[Simon]
\label{growth theorem}
There exist $h>0$, $T>0$, $\delta_3 >0$ depending only on $X$, $V$, $F$ such that for any $j\in\{3,4,5,\dots\}$ the following holds:

If $w \in C^{\infty}_{t,x}(0,jT)$, $a_{kl}(t,x)\in C^{\infty}_{t,x}(0,jT)$, and
\begin{align*}
&\partial_t^2w-\partial_tw-Hw=\sum_{0\leq k+l\leq2}a_{kl}(t,x) D_x^l\partial_t^k w,\\
&\max_{0\leq k+l\leq2}\sup_{(t,x)\in (0,jT)\times X} \left| {a}_{kl}(t,x) \right| \leq \delta_3,\\
&\|w\|_{L^2_{t,x}(0,jT)}<\infty,
\end{align*}
then, there exist integers $i_1$, $i_2$ with $0\leq i_1\leq i_2 \leq j$ such that:
if $1<i_1$, then
\begin{equation*}
\|w\|_{L^2_{t,x}(iT,(i+1)T)} \leq e^{-h T} \|w\|_{L^2_{t,x} ( (i-1)T,iT)}
\end{equation*}
for each $i\in \{1,\dots,i_1-1\}$; if $i_1<i_2$, then
\begin{equation*}
\| w(t) \|_{L^2_x}\leq (3/2) \| w(t') \|_{L^2_x}
\end{equation*}
for $t,t'\in(i_1T,i_2T)$ with $|t'-t|\leq T$, and
\begin{equation*}
\| \partial_tw(t) \|_{L^2_x}\leq (1/2) \| w(t) \|_{L^2_x}
\end{equation*}
for each $t\in (i_1T,i_2T)$; if $i_2<j-1$, then
\begin{equation*}
\|w\|_{L^2_{t,x} ( (i-1)T,iT) } \leq e^{-h T}\|w\|_{L^2_{t,x} (iT,(i+1)T)}
\end{equation*}
for each $i\in \{i_2+1,\dots,j-1\}$.
\end{theorem}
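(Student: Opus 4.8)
The plan is to follow Simon~\cite{Simon}: treat the equation as a perturbation, small with $\delta_3$, of the constant-coefficient equation $\partial_t^2w-\partial_tw-Hw=0$ on $(0,jT)\times X$, and read off the three phases from a spectral decomposition of $H$ together with two energy identities. First I would fix an $L^2_x$-orthonormal basis of eigensections of the self-adjoint elliptic operator $H$, with eigenvalues $\lambda_1\leq\lambda_2\leq\cdots\to+\infty$, and pick a threshold $0<\epsilon_1<\operatorname{dist}\bigl(0,\operatorname{spec}(H)\setminus\{0\}\bigr)$; then $w(t)$ decomposes $L^2_x$-orthogonally as $w(t)=w_-(t)+w_0(t)+w_+(t)$, the projections onto the spans of the eigensections with eigenvalue $\geq\epsilon_1$, $\leq-\epsilon_1$, and $0$ respectively. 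For the scalar mode of eigenvalue $\lambda$ the unperturbed equation $y''-y'-\lambda y=0$ has characteristic exponents $\mu_\pm(\lambda)=\tfrac12\bigl(1\pm\sqrt{1+4\lambda}\bigr)$: on the $w_+$-part one has $\mu_-(\lambda)\leq\mu_-(\epsilon_1)<0$ and $\mu_+(\lambda)\geq\mu_+(\epsilon_1)>1$ uniformly, so on any $t$-interval such a mode is a part decaying geometrically forward plus a part decaying geometrically backward; on the $w_-$-part both exponents have strictly positive real part bounded uniformly below (equal to $\tfrac12$ when $\lambda<-\tfrac14$), giving uniform forward growth; and on the $w_0$-part the exponents are $0$ and $1$, producing the $t$-independent solutions (the plateau) and the $e^t$-solutions. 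Thus $h=h(X,V,F)>0$ and $T=T(X,V,F)$ are fixed, built out of $|\mu_\pm(\epsilon_1)|$ and $\tfrac14$, and $\delta_3=\delta_3(X,V,F)$ chosen small, so that the errors do not spoil the geometric rates.

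The analytic input is the pair of identities obtained by testing the equation in $L^2_x$ against $w$ and against $\partial_tw$. Writing $N(t)=\|w(t)\|_{L^2_x}^2$, $Q(t)=(Hw(t),w(t))_{L^2_x}$, the hyperbolic energy $E(t)=\|\partial_tw(t)\|_{L^2_x}^2-Q(t)$, and $\varrho=\partial_t^2w-\partial_tw-Hw$ for the right-hand side, integration by parts in $x$ gives
\[\tfrac12N''-\tfrac12N'=\|\partial_tw\|_{L^2_x}^2+Q+(\varrho,w)_{L^2_x},\qquad \tfrac12E'=\|\partial_tw\|_{L^2_x}^2+(\varrho,\partial_tw)_{L^2_x}.\]
Once the error terms $(\varrho,\cdot\,)_{L^2_x}$ are absorbed (see the last paragraph), these show that $E$ is nondecreasing up to an error controlled by $\delta_3$ and $e^{-hT}$, and that at each $t$ either (i) $w(t)$ is, up to a small $L^2_x$-error, a $t$-independent element of the $w_0$-space while $\|\partial_tw(t)\|_{L^2_x}$ is small relative to $\|w(t)\|_{L^2_x}$, so that $N$ varies slowly near $t$ (the plateau), or (ii) $|E(t)|\geq c\,N(t)$ for some $c=c(\epsilon_1)>0$, with a sign that, by the monotonicity of $E$, persists on the length-$T$ interval about $t$: $E>0$ forces the $w_-\oplus w_+$-content, hence $N$, to grow geometrically forward there, while $E<0$ forces geometric decay.

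Next I would establish the discrete three-interval inequality: writing $g(i)=\|w\|_{L^2_{t,x}(iT,(i+1)T)}$ for $0\leq i\leq j-1$, the dichotomy above shows that for each $i$ at least one of $g(i+1)\leq e^{-hT}g(i)$, $g(i-1)\leq e^{-hT}g(i)$, or alternative~(i) throughout $(iT,(i+1)T)$ holds; and --- since $E$ changes sign at most once --- once the first of these fails at a step it fails at every later step, while once the second holds it holds at every later step. This monotone structure lets me take $i_1$ to be the largest index up to which the first alternative holds at every step and $i_2$ the smallest index from which the second holds at every step, so that on $(i_1T,i_2T)$ only the plateau alternative survives at every step. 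On that block I would promote the plateau alternative to the asserted pointwise bounds $\|w(t)\|_{L^2_x}\leq\tfrac32\|w(t')\|_{L^2_x}$ for $|t-t'|\leq T$ and $\|\partial_tw(t)\|_{L^2_x}\leq\tfrac12\|w(t)\|_{L^2_x}$, by feeding the smallness of $\int_{iT}^{(i+1)T}\|\partial_tw(t)\|_{L^2_x}^2\,dt$ --- which comes from $\tfrac12E'=\|\partial_tw\|_{L^2_x}^2+(\varrho,\partial_tw)_{L^2_x}$ together with $E\approx0$ on the block --- into the interior estimates for the uniformly elliptic operator $\partial_t^2-\partial_t-H$ in $(t,x)$ and the Schauder estimate of Remark~\ref{Schauder}, after enlarging $T$ and shrinking $\delta_3$ once more.

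The step I expect to be the main obstacle is the control of $\varrho=\sum_{k+l\leq2}a_{kl}D_x^l\partial_t^kw$: the hypothesis provides smallness only of $\sup|a_{kl}|$, yet $\varrho$ carries genuine second-order terms $a_{2,0}\partial_t^2w$, $a_{1,1}D_x\partial_tw$ and $a_{0,2}D_x^2w$ of the same order as the principal part. These must first be moved to the left, using that $\delta_3$ is small enough that $\partial_t^2-\partial_t-H-\sum_{k+l=2}a_{kl}D_x^l\partial_t^k$ remains uniformly elliptic on $(0,jT)\times X$; the remaining lower-order part of $\varrho$ is then absorbed through the Caccioppoli-type $L^2$ bounds implicit in the two energy identities, together with the interior regularity behind Remark~\ref{Schauder}. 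The delicate point throughout is to keep every constant independent of $j$, so that the splitting $[0,jT]=[0,i_1T]\cup[i_1T,i_2T]\cup[i_2T,jT]$ into a geometric-decay phase, a slowly varying plateau, and a geometric-growth phase is genuinely uniform --- which is exactly why $h$, $T$ and $\delta_3$ must be chosen before $j$. All of this follows the pattern of Simon~\cite{Simon}.
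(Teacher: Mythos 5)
Your proposal is correct in spirit, but it is worth noting that the paper does not prove this theorem at all: its ``proof'' is a citation to Simon~\cite[Theorem~3.4]{Simon2}, so what you have written is a reconstruction of Simon's own argument rather than a variant of anything in this paper. Your reconstruction follows the standard route (spectral splitting of the self-adjoint operator $H$ into the parts with eigenvalue $\geq\epsilon_1$, $=0$, $\leq-\epsilon_1$; the characteristic exponents $\tfrac12\bigl(1\pm\sqrt{1+4\lambda}\bigr)$; the two energy identities for $N(t)=\|w(t)\|_{L^2_x}^2$ and $E(t)=\|\partial_tw(t)\|_{L^2_x}^2-(Hw,w)_{L^2_x}$), and the identities themselves are computed correctly. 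Two points deserve care before this could count as a complete proof. First, the step ``since $E$ changes sign at most once, the alternatives are monotone in $i$'' is only approximately true once the perturbation $\varrho$ is present, and the passage from the sign of $E$ and the bound $|E|\geq cN$ to geometric growth or decay of the interval norms, and from ``$E\approx 0$ on the middle block'' to the pointwise bounds $\|w(t)\|_{L^2_x}\leq\tfrac32\|w(t')\|_{L^2_x}$ and $\|\partial_tw(t)\|_{L^2_x}\leq\tfrac12\|w(t)\|_{L^2_x}$, is where essentially all of Simon's work lies; as written this is asserted rather than proved. Second, your proposed treatment of the second-order perturbation terms is internally in tension: if you move $\sum_{k+l=2}a_{kl}D_x^l\partial_t^k$ to the left-hand side, the operator is no longer $\partial_t^2-\partial_t-H$ and the eigensection decomposition of $H$ no longer diagonalizes it, so the mode analysis does not apply verbatim. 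The standard resolution (and Simon's) is to keep these terms on the right and absorb them: on unit-scale $t$-intervals the interior $L^2$ elliptic estimate for $\partial_t^2-\partial_t-H$ bounds $\|\partial_t^2w\|+\|D_x\partial_tw\|+\|D_x^2w\|$ in $L^2$ of a slightly smaller interval by $\|w\|_{L^2}$ of the larger one plus $\|\varrho\|_{L^2}$, and for $\delta_3$ small the $\varrho$-contribution is reabsorbed; since these estimates are local in $t$ at a fixed scale, the constants are automatically independent of $j$, which is the uniformity you rightly insist on. With those two steps carried out in detail your outline becomes Simon's proof; as it stands it is a faithful but incomplete sketch of the result the paper simply quotes.
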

\begin{proof}
See Simon~\cite[Theorem~3.4]{Simon2}.
\end{proof}
The following lemma will be important in the proof of Theorem~\ref{theorem}; see Section~\ref{proof section}.
\begin{lemma}
\label{Simon's estimate}
Let $X$ be a compact smooth Riemannian manifold,
$V$ be a smooth real vector bundle over $X$ with a fibre metric and a metric connection,
$F:C^{\infty}_x\rightarrow \mathbb{R}$ be a functional with $\grad F(0)=0$ and of the form~\eqref{multiple integral} with integrand $f$ satisfying \upshape{(C1)}, \upshape{(C2)}.
Then, there exist real numbers $C>0$ and $\theta\in (0,1/2)$ such that
for every remainder $R$ of the form~\eqref{remainder},
there exists $\delta_*>0$ such that the following holds:

If $t_0< t_*$, if $0<\delta<1$,
if $u \in C^{\infty}_{t,x}(t_0,t_*)$ satisfies Simon's equation~\eqref{Simon's equation},
and if
\begin{align}
\label{solution with 11}
& \|u\|_{C^{1,1/2}_{t,x}(t_0,t_*)}\leq \delta_*, \\
\label{solution with 22}
& \limsup_{t \to t_0} \|u(t)\|_{L^2_x}\leq \delta, \\
\label{solution with 33}
& \sup_{t\in (t_0,t_*)} \Bigl( {F}(0)-{F}\bigl( u(t) \bigr) \Bigr) \leq \delta,\\
\label{solution with 44}
& \|\partial_tu\|_{L^2_{t,x}(t_0,t_*)}\leq \sqrt{\delta},
\end{align}
then
\begin{equation*}
\sup_{t\in(t_0,t_*)} \| u(t) \|_{L^2_x}<C_*\delta^{\theta}.
\end{equation*}
\end{lemma}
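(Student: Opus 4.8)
The plan is to combine the two Simon-type results from this section---the \L{}ojasiewicz--Simon stability inequality (Theorem~\ref{Lojasiewicz-Simon}) and the growth theorem (Theorem~\ref{growth theorem})---in the manner of Simon's original argument, using the hypotheses \eqref{solution with 11}--\eqref{solution with 44} to feed both. First I would fix the constants $\theta$, $\delta_0$, $T$, $h$, $\delta_3$, $C_1$ produced by Theorems~\ref{Lojasiewicz-Simon}, \ref{growth theorem} and Remarks~\ref{remainder condition}--\ref{Schauder}, all depending only on $X$, $V$, $F$; then, given a remainder $R$, I would choose $\delta_*$ small enough that $\delta_5$, $\delta_2$ of Remarks~\ref{remainder condition}, \ref{Schauder?} are satisfied whenever $\|u\|_{C^{1,1/2}_{t,x}}\le\delta_*$. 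The point of \eqref{solution with 11} is then twofold: via Remark~\ref{remainder condition} it turns Simon's equation~\eqref{Simon's equation} into the form $\partial_t^2u-\partial_tu-\grad F(u)+R=0$ with $R$ so small that $\|\partial_tu+\grad F(u)\|_{L^2_x}\le(3/4)\|\partial_tu\|_{L^2_x}$ pointwise in $t$ (absorbing the $\partial_t^2u$ and $D_x\partial_tu$ terms of $R$ by interior elliptic estimates, Remark~\ref{Schauder?}, \ref{Schauder}), which is exactly the third hypothesis of Theorem~\ref{Lojasiewicz-Simon}; and via Remark~\ref{Schauder?} it makes the coefficients $E_{kl}=a_{kl}$ of the linearized equation $\partial_t^2w-\partial_tw-Hw=\sum a_{kl}D_x^l\partial_t^kw$ (with $w=u$) small enough to apply Theorem~\ref{growth theorem} on $(t_0,t_*)$.

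The core of the argument is the following dichotomy, extracted from Theorem~\ref{growth theorem} applied to $w=u$ on an interval of length $jT$ exhausting $(t_0,t_*)$. One gets integers $0\le i_1\le i_2\le j$ splitting the time interval into an initial exponential-decay regime $(t_0,t_0+i_1T)$, a middle regime $(t_0+i_1T,t_0+i_2T)$ on which $\|\partial_tu(t)\|_{L^2_x}\le(1/2)\|u(t)\|_{L^2_x}$ and $\|u\|_{L^2_x}$ is comparable at nearby times, and a final exponential-decay regime. On the initial regime, geometric summation of the decay estimate against $\limsup_{t\to t_0}\|u(t)\|_{L^2_x}\le\delta$ from \eqref{solution with 22} gives $\|u(t)\|_{L^2_x}\lesssim\delta$ throughout. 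On the middle regime, the inequality $\|\partial_tu\|_{L^2_x}\le(1/2)\|u\|_{L^2_x}$ together with the third hypothesis of the stability theorem (already verified) lets me apply Theorem~\ref{Lojasiewicz-Simon}: I get $\int\|\partial_tu(t)\|_{L^2_x}\,dt\le(4/\theta)(|F(u(t_3))-F(0)|^\theta+\delta^\theta)$; bounding $|F(u(t_3))-F(0)|$ by $\delta$ using \eqref{solution with 33} (the one-sided bound $F(0)-F(u)\le\delta$, with the reverse direction handled because $u(t_3)$ lies in the decay regime where $\|u\|_{L^2_x}\lesssim\delta$ and $F$ is Lipschitz near $0$), this bounds the total $L^2_t$-variation of $\|u(t)\|_{L^2_x}$ on the middle regime by $C\delta^\theta$, hence $\sup\|u(t)\|_{L^2_x}\le C\delta^\theta$ there once we know it holds at one endpoint (which it does, by continuity from the initial regime). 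The final regime is controlled by decay from the end of the middle regime.

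The main obstacle I expect is the careful bookkeeping at the \emph{interfaces} between the three regimes and the promotion of the pointwise-in-$t$ $L^2_x$ bounds to the bound at the transition times---in particular making sure the constant $C_*$ and the choice of $\delta_*$ genuinely depend only on $X$, $V$, $F$ (so that $\theta$, $C_*$ are chosen \emph{before} $R$, as the statement demands) while $\delta_*$ may depend on $R$. A secondary technical point is the use of \eqref{solution with 44}: it is needed to guarantee $i_1$ and $i_2$ are both finite, i.e.\ that the middle ``no-decay'' regime cannot run all the way to $t_*$, since on such a regime $\|\partial_tu\|_{L^2_x}$ would be bounded below in terms of $\|u\|_{L^2_x}$ and integrating would contradict $\|\partial_tu\|_{L^2_{t,x}}\le\sqrt\delta<\infty$ unless $\|u\|_{L^2_x}$ is already small; reconciling this with a possibly infinite number of $T$-blocks requires either passing to the limit $j\to\infty$ or applying the growth theorem on each finite truncation with uniform constants and taking a limit, which is the one place where some genuine care (rather than routine calculation) is called for.
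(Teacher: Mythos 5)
There is a genuine gap at the heart of your argument: you apply Theorem~\ref{growth theorem} to $w=u$, whereas the decisive step is to apply it to $w=\partial_t u$. Differentiating Simon's equation~\eqref{Simon's equation} in $t$ shows that $u'=\partial_t u$ satisfies an equation of the form $\partial_t^2u'-\partial_tu'-Hu'=\sum\tilde a_{kl}D_x^l\partial_t^ku'$ with small coefficients, and it is the middle-regime conclusions of the growth theorem \emph{for this $w=\partial_t u$} that one needs: namely $\|\partial_t^2u(t)\|_{L^2_x}\leq(1/2)\|\partial_tu(t)\|_{L^2_x}$ and the comparability of $\|\partial_tu(t)\|_{L^2_x}$ at nearby times, the latter combining with the Schauder estimate of Remark~\ref{Schauder} to give $\|D_x\partial_tu(t)\|_{L^2_x}\leq C_2\|\partial_tu(t)\|_{L^2_x}$. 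Only with these two \emph{relative} bounds, together with Remark~\ref{remainder condition}, can one absorb the terms $\partial_t^2u$ and $R(u,\partial_tu,\partial_t^2u)$ in $\partial_tu+\grad F(u)=\partial_t^2u+R$ and verify the hypothesis $\|\partial_tu(t)+\grad F(u(t))\|_{L^2_x}\leq(3/4)\|\partial_tu(t)\|_{L^2_x}$ of Theorem~\ref{Lojasiewicz-Simon}. Your claim that this hypothesis follows ``pointwise in $t$'' from the $C^{1,1/2}$-smallness of $u$ plus interior elliptic estimates does not work: Schauder estimates bound $\|\partial_t^2u\|$ and $\|D_x\partial_tu\|$ in absolute terms by local norms of $u$, but the needed inequality is a bound \emph{relative} to $\|\partial_tu(t)\|_{L^2_x}$, which may be arbitrarily small compared with those local norms; no smallness of $\delta_*$ repairs this. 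Applying the growth theorem to $u$ instead yields $\|\partial_tu(t)\|\leq(1/2)\|u(t)\|$ and comparability of $\|u\|$, which is not what the stability theorem requires, so the middle regime of your argument is unsupported.

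Two secondary points. First, your treatment of $|F(u(t_3))-F(0)|$ via ``$F$ Lipschitz near $0$'' and an $L^2_x$ bound on $u(t_3)$ is too weak: $F$ depends on $D_xu$, and since $\grad F(0)=0$ the paper instead uses the quadratic bound $|F(v)-F(0)|\leq C_0\|v\|_{C^1_x}^2$ together with the Schauder estimate $\|u(t_3)\|_{C^1_x}\leq C_1\|u\|_{L^2_{t,x}(t_3-T/4,t_3+T/4)}$, and then controls the latter by $\limsup_{t\to t_0}\|u(t)\|_{L^2_x}+\int\|\partial_tu\|_{L^2_x}dt\lesssim\delta$; note also that in the paper the whole argument is organized around bounding $\int_{t_0}^{t_*}\|\partial_tu(t)\|_{L^2_x}dt$, from which the $\sup_t\|u(t)\|_{L^2_x}$ bound follows by \eqref{solution with 22}, rather than propagating sup-bounds regime by regime. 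Second, your worry about ``infinitely many $T$-blocks'' is unfounded ($t_*-t_0$ is finite, and the short ends of length comparable to $T$ are handled directly by Cauchy--Schwarz and \eqref{solution with 44}), and in the final regime the growth theorem gives norms \emph{increasing} toward $t_*$, so that regime is anchored at the last block via \eqref{solution with 44}, not by ``decay from the end of the middle regime''.
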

\begin{proof}
By \eqref{solution with 22}, it suffices to prove the following:
\begin{equation}
\label{a-priori estimate}
\int_{t_0}^{t_*} \| \partial_tu(t) \|_{L^2_x}dt<C_*\delta^{\theta}.
\end{equation}

Let $T=T(X,V,F)>0$ be as in Theorem~\ref{growth theorem}.
If $t_*-t_0<5T$, then
\begin{equation}
\label{the first}
\begin{split}
\int_{t_0}^{t_*} \| \partial_tu(t) \|_{L^2_x}dt
&\leq \sqrt{t_*-t_0}\|\partial_t u\|_{L^2_{t,x}(t_0,t_*)}\\
&\leq \sqrt{5T}\delta,
\end{split}
\end{equation}
which gives the conclusion~\eqref{a-priori estimate}.

Suppose $t_*-t_0\geq 5T$.
Let $t_1, t_6\in(t_0,t_*)$ be such that $T/2\leq t_1-t_0\leq T$, $T/2\leq t_*-t_6\leq T$ and 
$t_6-t_1=jT$
for some  $j\in \{3,4,5,\dots \}$.
Then,
\begin{align}
\label{t_0 to t_1} &\int_{t_0}^{t_1}\|\partial_t u(t)\|_{L^2_{t,x}}\leq \sqrt{T}\delta .\\
\label{t_6 to t_*} &\int_{t_6}^{t_*}\|\partial_t u(t)\|_{L^2_{t,x}}\leq \sqrt{T}\delta .
\end{align}
in the same way as \eqref{the first}.

Suppose $\delta_*<\delta_2$, where $\delta_2=\delta_2(X,V,F,R)>0$ as in Remark~$\ref{Schauder?}$.
By \eqref{solution with 11}, Remark~\ref{Schauder?} and Remark~\ref{Schauder},
\[\|u\|_{C^{2,1/2}_{t,x}(t_1,t_6)}\leq C_1\delta_*,\]
where $C_1=C_1(X,V,F;T)>0$ be as in Remark~\ref{Schauder}, so that
$C_1=C_1(X,V,F)$ since $T=T(X,V,F)>0$.
It suffices therefore to prove \eqref{a-priori estimate}
under the assumption that 
\[ \|u\|_{C^{2,1/2}_{t,x}(t_1,t_6)}\leq \delta_{**}\]
for some $\delta_{**}=\delta_{**}(X,V,F,R)>0$.

Set $u'=\partial_tu$.
Then,
\begin{equation}
\label{linearized equation'}
\partial_t^2u'-\partial_tu'-Hu'=\sum_{0\leq k+l\leq2}\tilde{a}_{kl}(t,x) D_x^l\partial_t^k u'
\end{equation}
for some smooth functions $\tilde{a}_{kl}(t,x)$ of $t\in (t_0,t_*)$, $x\in X$ with $\tilde{a}_{kl}(t,x)\in \bigotimes^l T_xX$ for every $k$, $l$ with $0\leq k+l\leq2$.
By an argument similar to Remark~\ref{Schauder?}, there exists $\delta_4=\delta_4(X,V,F,R)>0$
such that if $\|u\|_{C^{2,1/2}_{t,x}(t_1,t_6)}\leq \delta_4$, then
\begin{align}
\label{error1}
&\max_{0\leq k+l\leq2}\|E_{kl}\|_{C^{0,1/2}_{t,x}(t_1,t_6)}\leq \delta_1,\\
\label{error2}
&\max_{0\leq k+l\leq2}\|\tilde{a}_{kl}\|_{C^{0,1/2}_{t,x}(t_1,t_6)}\leq \min\{\delta_1,\delta_3\},
\end{align}
where $\delta_1=\delta_1(X,V,F)$ as in Remark~\ref{Schauder}, and $\delta_3=\delta_3(X,V,F)$ as in Theorem~\ref{growth theorem}.

Suppose $\|u\|_{C^{2,1/2}_{t,x}(t_1,t_6)}\leq \delta_4$, where $\delta_4=\delta_4(X,V,F,R)>0$ as above.
By \eqref{linearized equation'} and \eqref{error2}, $u'$ satisfies the assumption in Theorem~\ref{growth theorem}.
Therefore, there exists integers $i_1$, $i_2$ with $0\leq i_1\leq i_2 \leq j$ such that: if $1<i_1$, then
\begin{equation}
\label{growth1}
\|\partial_tu\|_{L^2_{t,x}(t_1+iT,t_1+(i+1)T)} \leq e^{-h T} \|\partial_tu\|_{L^2_{t,x} (t_1+(i-1)T,t_1+iT)}
\end{equation}
for each $i\in \{1,\dots,i_1-1\}$; if $i_1<i_2$, then
\begin{equation}
\label{growth2}
\| \partial_tu(t) \|_{L^2_x}\leq (3/2) \| \partial_tu(t') \|_{L^2_x}
\end{equation}
for $t,t'\in(t_1+i_1T,t_1+i_2T)$ with $|t'-t|\leq T$, and
\begin{equation}
\label{growth3}
\| \partial_t^2 u(t) \|_{L^2_x}\leq (1/2) \| \partial_t u(t) \|_{L^2_x}
\end{equation}
for each $t\in (t_1+i_1T,t_1+i_2T)$; if $i_2<j-1$, then
\begin{equation}
\label{growth4}
\|\partial_t u\|_{L^2_{t,x} ( t_1+(i-1)T,t_1+iT) } \leq e^{-h T}\|\partial_t u\|_{L^2_{t,x} (t_1+iT,t_1+(i+1)T)}
\end{equation}
for each $i\in \{i_2+1,\dots,j-1\}$; here $h=h(X,V,F)>0$ as in Theorem~\ref{growth theorem}.
By the method in \eqref{the first}, and by \eqref{growth1}, 
\begin{equation}
\label{t_1 to t_2}
\begin{split}
 \int_{t_1}^{t_1+i_1T}\|\partial_t u(t)\|_{L^2_x}dt
& \leq \sum_{i=1}^{i_1-1}\int_{t_1+iT}^{t_1+(i+1)T}\|\partial_t u(t)\|_{L^2_x}dt\\
&\leq \sum_{i=0}^{i_1-1}\sqrt{T}\|\partial_t u\|_{L^2_{t,x} ( t_1+iT,t_1+(i+1)T)}\\
&\leq \sum_{i=0}^{i_1-1}\sqrt{T}e^{-ihT}\|\partial_t u\|_{L^2_{t,x} ( t_1,t_1+T)}\\
&\leq \sqrt{T}(1-e^{-hT})^{-1}\|\partial_t u\|_{L^2_{t,x} ( t_1,t_1+T)}\\
&\leq \sqrt{T}(1-e^{-hT})^{-1}\delta.
\end{split}
\end{equation}
Set $t_5=t_1+i_2T$.
As \eqref{growth1} gives \eqref{t_1 to t_2}, so \eqref{growth4} gives the following:
\begin{equation}
\label{t_5 to t_*}
\begin{split}
 \int_{t_5}^{t_6}\|\partial_t u(t)\|_{L^2_x}dt\leq \sqrt{T}(1-e^{-hT})^{-1}\delta.
\end{split}
\end{equation}
If $i_1=i_2$, then \eqref{t_0 to t_1}, \eqref{t_1 to t_2} and \eqref{t_5 to t_*} imply the conclusion~\eqref{a-priori estimate}.

Suppose $i_1<i_2$.
Set $t_2=t_1+i_1T$, $t_3=t_2+T/3$, $t_4=t_5-T/3$.
Then,
\begin{align}
\label{t_2 to t_3}
& \int_{t_2}^{t_3}\|\partial_t u(t)\|_{L^2_x}dt\leq \sqrt{T/3}\delta, \\
\label{t_2 to t_3'}
& \int_{t_2}^{t_3+T/4}\|\partial_t u(t)\|_{L^2_x}dt\leq \sqrt{7T/12}\delta, \\
\label{t_4 to t_5}
& \int_{t_4}^{t_5}\|\partial_t u(t)\|_{L^2_x}dt\leq \sqrt{T/3}\delta.
\end{align}
in the same way as \eqref{the first}.
 
For each $t\in[t_3,t_4]$, on the other hand, by \eqref{error1}, \eqref{error2} and Remark~\ref{Schauder},
\begin{align}
\label{Schauder1}
& \|\partial_t u\|_{C^{2,1/2}_{t,x} (t-T/5,t+T/5)}\leq C_1\|\partial_t u\|_{L^{2}_{t,x} (t-T/4,t+T/4)},\\
\label{Schauder2}
& \|u\|_{C^{2,1/2}_{t,x} (t-T/5,t+T/5)}\leq C_1\|u\|_{L^{2}_{t,x} (t-T/4,t+T/4)},
\end{align}
where $C_1=C_1(X,V,F;T)>0$ as in Remark~\ref{Schauder}, so that $C_1=C_1(X,V,F)$ since $T=T(X,V,F)$.
By \eqref{Schauder1} and \eqref{growth2}, for each $t\in[t_3,t_4]$,
\begin{equation}
\label{k>0}
\begin{split}
\|D_x\partial_t  u(t)\|_{L^2_x} 
&\leq \sqrt{\Vol(X)} \sup_{x\in X}|D_x \partial_t u(t,x)| \\
&\leq \sqrt{\Vol(X)}\|\partial_t u\|_{C^{2,1/2}_{t,x} (t-T/5,t+T/5)}\\
&\leq \sqrt{\Vol(X)}C_1\|\partial_t u\|_{L^{2}_{t,x} (t-T/4,t+T/4)}\\
&\leq \sqrt{\Vol(X)}C_1(3/2)\sqrt{T/2}\|\partial_t u(t)\|_{L^2_x}\\
&\leq C_2\|\partial_t u(t)\|_{L^2_x},
\end{split}
\end{equation}
where $C_2=\sqrt{\Vol(X)}C_1(3/2)\sqrt{T/2}$, so that $C_2=C_2(X,V,F)>0$.
Therefore, by Remark~\ref{remainder condition}, \eqref{growth3} and Simon's equation~\eqref{Simon's equation}, for each $t\in[t_3,t_4]$,
\begin{equation}
\label{equation234}
\begin{split}
\| \partial_t u(t)+\grad F\bigl(u(t)\bigr) \|_{L^2_x}
&\leq \| \partial_t^2u(t)+R\left( u(t),\partial_t u(t),\partial_t^2u(t) \right)\|_{L^2_x}\\
&\leq \left( \frac{1}{2}+\frac{1}{16}+\frac{1}{16}+\frac{1}{16C_2}C_2+\frac{1}{8}\frac{1}{2}  \right)\|\partial_t u(t)\|_{L^2_x} \\
&\leq (3/4)\| \partial_t u(t) \|_{L^2_x}.
\end{split}
\end{equation}
if $\|u\|_{C^{2,1/2}_{t,x}(t_1,t_6)}\leq \delta_5$, where $\delta_5=\delta_5(X,V,F,R,C_2)>0$
as in Remark~\ref{remainder condition}, so that $\delta_5=\delta_5(X,V,F,R)$.

Suppose $\|u\|_{C^{2,1/2}_{t,x}(t_1,t_6)} \leq \min\{\delta_5,\delta_0\}$, where $\delta_5=\delta_5(X,V,F,R)$
as above, and $\delta_0=\delta_0(X,V,F)>0$ as in Theorem~\ref{Lojasiewicz-Simon}.
By Theorem~\ref{Lojasiewicz-Simon}, \eqref{equation234} and \eqref{solution with 33},
\begin{equation*}
\int_{t_3}^{t_4}\|\partial_t u(t)\|_{L^2_x} dt \leq (4/\theta) \left( \Bigl|F\bigl( u(t_3)\bigr)-F(0)\Bigr|^{\theta}+\delta^{\theta} \right),
\end{equation*}
where $\theta=\theta(X,V,F)>0$ as in Theorem~\ref{Lojasiewicz-Simon}.
Since $\grad F(0)=0$,
there exists $C_0=C_0(X,V,F)>0$ such that if $v\in C^{\infty}_x$, $\|v\|_{C^{2,1/2}_{x}}<\delta_0$, then
\[ |F(v)-F(0)| \leq C_0\|v\|_{C^1_x}^2.\]
Therefore, by \eqref{Schauder2},
\begin{equation*}
\begin{split}
\bigl|F\bigl(u(t_3)\bigr)-F(0)\bigr|
&\leq C_0\|u(t_3)\|_{C^1_x}^2\\
&\leq C_0C_1\|u\|_{L^2_{t,x} (t_3-T/4,t_3+T/4)}^2\\
&\leq C_0C_1(T/2)\sup_{t\in (t_3-T/4,t_3+T/4)}\|u(t)\|_{L^2_x}^2,
\end{split}
\end{equation*}
whereas by \eqref{solution with 22}, \eqref{t_0 to t_1}, \eqref{t_1 to t_2} and \eqref{t_2 to t_3'},
\begin{equation*}
\begin{split}
\sup_{t\in (t_3-T/4,t_3+T/4)}\|u(t)\|_{L^2_x}
&\leq \limsup_{t\to t_0}\|u(t)\|_{L^2_x}+ \int_{t_0}^{t_3+T/4}\|\partial_t u(t)\|_{L^2_x}dt\\
&\leq \left( 1+\sqrt{T}+ \sqrt{T}(1-e^{-hT})^{-1} +\sqrt{7T/12} \right)\delta ,
\end{split}
\end{equation*}
Thus,
\begin{equation}
\label{t_3 to t_4}
 \int_{t_3}^{t_4}\|\partial_tu(t)\|_{L^2_x}dt\leq C_*\delta^{\theta}
\end{equation}
for some constant $C_*$ depending only on $X$, $V$, $F$.
Thus, \eqref{t_0 to t_1}, \eqref{t_1 to t_2}, \eqref{t_2 to t_3}, \eqref{t_3 to t_4}, \eqref{t_4 to t_5}, \eqref{t_5 to t_*} and \eqref{t_6 to t_*}
imply the conclusion~\eqref{a-priori estimate}.

The proof of Lemma~\ref{Simon's estimate} is thus complete.
\end{proof}
\section{Proof of Theorem~\ref{theorem}}
\label{proof section}
We now prove Theorem~\ref{theorem}.

Let $\phi$ be a parallel calibration of degree $m>1$ on the Euclidean space $\mathbb{R}^n$.
Let $\psi$ be the $(m-1)$-form
$( \partial_r \lrcorner \phi )|_{\mathbb{S}^{n-1}}$ on the unit sphere $\mathbb{S}^{n-1}$.
Let $X$ be a compact $\psi$-submanifold of $\mathbb{S}^{n-1}$.

By Proposition~\ref{psi minimal}, $X$ is a minimal submanifold of $\mathbb{S}^{n-1}$.
Let $F$, $R$ be as in Remark~\ref{minimal submanifolds equation}.
Let $\theta\in (0,1/2)$, $C_*>0$, $\delta_*>0$ be as in Lemma~\ref{Simon's estimate},
so that $\theta=\theta(m,n,X)$, $C_*(m,n,X)>0$, $\delta_*(m,n,X)>0$.

Let $\beta$ be a positive real number~$<1$.
Suppose $\epsilon_*>0$ is so small that if $\|\nu\|_{C^0_{\mathrm{cyl}}}\leq \epsilon_*$, then:
\begin{equation}
\label{graph}
\begin{split}
&G(\nu)\text{ is a closed submanifold of }A(b_0,b_1;\mathbb{S}^{n-1}),\\
&\text{and if }G(\nu)=G(\nu')\text{ for some }\nu'\text{, then }\nu=\nu'
\end{split}
\end{equation}
in the notation of Section~\ref{statement}.
Let $\epsilon_{**}>0$ be as in Lemma~\ref{extension lemma}, so that
$\epsilon_{**}=\epsilon_{**}(m,n,X,\phi,\beta,\epsilon_*)=$.

Suppose now that $\epsilon >0$, that $b_0>0$, $b_1>0$ with $b_0<b_1\beta$,
that $M$ is a closed $\phi$-submanifold of $A(b_0\beta,b_1;\mathbb{S}^{n-1})$, and
that for each $i=0,1$ there exists a normal vector field $\nu_i$ on $A(b_i\beta,b_i;X)$ in $A(b_i\beta,b_i;\mathbb{S}^{n-1})$ such that
\begin{equation}
\label{nu_i}
\begin{split}
& M\cap A(b_i\beta,b_i;\mathbb{S}^{n-1})=G(\nu_i), \\
& \| \nu _i \|_{C^1_r}\leq \epsilon \text{ for each } i=0,1.
\end{split}
\end{equation}

For each $i=0,1$, by $\|\nu_i\|_{C^1_r}\leq \epsilon$,
\begin{equation}
\label{c_0}
\sup_{b\in (b_i\beta,b_i)}\left| \int_X \psi -\int_{M\cap r^{-1}(b)}s^*\psi \right| \leq c_0\epsilon.
\end{equation}
for some $c_0=c_0(m,n,X,\phi)>1$.
Therefore, by Stokes's theorem,
\begin{equation}
\label{energy estimate}
\int_{M} s^*d\psi \leq 2 c_0\epsilon.
\end{equation}
Therefore, if $\epsilon<\epsilon_{**}$, then by \eqref{nu_i} with $i=1$, and by Lemma~\ref{extension lemma},
there exists a normal vector field $\nu$ on $A(b_1\beta^{3/2},b_1;X)$ in $A(b_1\beta^{3/2},b_1;\mathbb{S}^{n-1})$
such that
\begin{equation}
\label{non-empty}
\begin{split}
& M\cap A(b_1\beta^{3/2},b_1;\mathbb{S}^{n-1})=G(\nu),\\
& \|\nu\|_{C^{1,1/2}_{\mathrm{cyl}}(b_1\beta^{3/2},b_1)} \leq \epsilon_*.
\end{split}
\end{equation}

Suppose $\epsilon<\epsilon_{**}$.
Let $S_*$ be the set of all $b_*\in [b_0,b_1\beta)$ such that
there exists a normal vector field $\nu$ on $A(b_*,b_1;X)$ in $A(b_*,b_1;\mathbb{S}^{n-1})$
such that
\begin{equation}
\label{nu}
\begin{split}
& M\cap A(b_*,b_1;\mathbb{S}^{n-1})=G(\nu),\\
& \|\nu\|_{C^{1,1/2}_{\mathrm{cyl}}(b_*,b_1)} \leq \epsilon_*.
\end{split}
\end{equation}
$S_*$ is non-empty since $b_1\beta^{3/2}\in S_*$ by \eqref{non-empty}.
\begin{proposition}
\label{S_*}
Suppose $b_*\in S_*\cap [b_0,b_1)$,
and let $\nu$ be as in \eqref{nu}.
Set $u(t,x)=e^{t/m}\nu(e^{-t/m}x)$, $t_0=-\log{b_1}/m$ and $t_*=-\log{b_*}/m$.
Then,
\[ \|u\|_{L^{2}_{t,x}(t_0,t_*)}\leq C_*(4c_0\epsilon)^{\theta}\]
if $\epsilon<(4c_0)^{-1}$ and $\epsilon_*<\epsilon_1$ for some $\epsilon_1=\epsilon_1(m,n,X)>0$.
\end{proposition}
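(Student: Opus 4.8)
The plan is to apply Lemma~\ref{Simon's estimate} to the rescaled field $u$, with $\delta:=4c_0\epsilon$; the hypothesis $\epsilon<(4c_0)^{-1}$ is precisely what makes $0<\delta<1$, as Lemma~\ref{Simon's estimate} demands. First, since $b_*<b_1\beta$, the field $\nu$ of \eqref{nu} is defined on all of $A(b_*,b_1;X)$, and $M\cap A(b_*,b_1;\mathbb{S}^{n-1})=G(\nu)$ is a $\phi$-submanifold of $\mathbb{R}^n$, hence minimal; as $X$ is a $\psi$-submanifold, and so minimal in $\mathbb{S}^{n-1}$ by Proposition~\ref{psi minimal}, Remark~\ref{minimal submanifolds equation} applies, with the functional $F$ and remainder $R$ fixed there depending only on $m$, $n$, $X$, and $u(t,x)=e^{t/m}\nu(e^{-t/m}x)$ satisfies Simon's equation~\eqref{Simon's equation} on $(t_0,t_*)\times X$. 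It remains to verify hypotheses \eqref{solution with 11}--\eqref{solution with 44} of Lemma~\ref{Simon's estimate} with this $\delta$; throughout, $c_0$ denotes the constant of \eqref{c_0}, enlarged if necessary while still depending only on $m$, $n$, $X$, $\phi$.

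For \eqref{solution with 11}: the substitution $r=e^{-t/m}$ identifies $A(b_*,b_1;X)$ with $(t_0,t_*)\times X$ and carries $dr^2/r^2+ds^2$ to $m^{-2}dt^2+ds^2$, which is bi-Lipschitz to $dt^2+dx^2$ with constants depending only on $m$, and under which $|u|=|\nu|/r$ while $|Du|$ is comparable to the cylindrical covariant derivative of $\nu$; hence $\|u\|_{C^{1,1/2}_{t,x}(t_0,t_*)}\leq C(m)\|\nu\|_{C^{1,1/2}_{\mathrm{cyl}}(b_*,b_1)}\leq C(m)\epsilon_*\leq\delta_*$ once $\epsilon_1$ is taken with $C(m)\epsilon_1\leq\delta_*$. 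For \eqref{solution with 22}: as $t\to t_0$ we have $r\to b_1$, where $\nu$ and $\nu_1$ both represent $M$ over $A(b_1\beta,b_1;\mathbb{S}^{n-1})$, so $\nu=\nu_1$ there by the uniqueness in \eqref{graph}; then $|u(t,\cdot)|\leq\|\nu_1\|_{C^1_{\mathrm{cyl}}}\leq\epsilon$ yields $\limsup_{t\to t_0}\|u(t)\|_{L^2_x}\leq\sqrt{\Vol(X)}\,\epsilon\leq c_0\epsilon<\delta$. For \eqref{solution with 33}: $F(0)=\Vol(X)=\int_X\psi$ since $X$ is a $\psi$-submanifold, and for $t\in(t_0,t_*)$, with $b=e^{-t/m}\in(b_*,b_1)$, the radial map $s$ restricts to a diffeomorphism $M\cap r^{-1}(b)\to G(u(t))$ (using $\nu(a)\perp a$, so points of $G(\nu)$ over $a$ sit at radius $|a|$ and $M\cap r^{-1}(b)$ is a genuine graph that $s$ carries to $G(u(t))$), giving $\int_{M\cap r^{-1}(b)}s^*\psi=\int_{G(u(t))}\psi\leq\Vol(G(u(t)))=F(u(t))$ by Proposition~\ref{psi comass less than one}; since $b$ and any $b'\in(b_0\beta,b_0)$ are regular values of $r|_M$ and $b_0\beta<b'<b_0\leq b_*<b\leq b_1$, Corollary~\ref{monotonicity} gives $\int_{M\cap r^{-1}(b)}s^*\psi\geq\int_{M\cap r^{-1}(b')}s^*\psi\geq\int_X\psi-c_0\epsilon$ by \eqref{c_0} at $i=0$, so $F(0)-F(u(t))\leq c_0\epsilon<\delta$.

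The crux is \eqref{solution with 44}, that is, $\|\partial_tu\|_{L^2_{t,x}(t_0,t_*)}^2\leq4c_0\epsilon$. Stokes' theorem for $s^*\psi$ on $M$ together with \eqref{c_0} at $i=0,1$ gives \eqref{energy estimate}, $\int_M s^*d\psi\leq2c_0\epsilon$; since $s^*d\psi|_M=mr^{-m}|\partial_r\wedge\overrightarrow{TM}|^2\dVol_M\geq0$ by Proposition~\ref{dpsi}, also $\int_{M\cap A(b_*,b_1;\mathbb{S}^{n-1})}s^*d\psi\leq2c_0\epsilon$. On the other hand a first-order computation in the coordinates $(t,x)$ for the graph $M\cap A(b_*,b_1;\mathbb{S}^{n-1})=G(\nu)$ gives $|\partial_r\wedge\overrightarrow{TM}|=|\mathrm{pr}_{(TM)^\perp}\partial_r|=m\,|\partial_tu|\,(1+O(\|\nu\|_{C^1_{\mathrm{cyl}}}))$ and $\dVol_M=m^{-1}r^m(1+O(\|\nu\|_{C^1_{\mathrm{cyl}}}))\,dt\wedge dx$, whence $\int_{M\cap A(b_*,b_1;\mathbb{S}^{n-1})}s^*d\psi=m^2(1+O(\|\nu\|_{C^1_{\mathrm{cyl}}}))\|\partial_tu\|_{L^2_{t,x}(t_0,t_*)}^2$; shrinking $\epsilon_1$ (depending only on $m$, $n$, $X$) so that $\|\nu\|_{C^1_{\mathrm{cyl}}}\leq\epsilon_*\leq\epsilon_1$ forces the factor $\geq1/2$, and using $m\geq2$, we obtain $\|\partial_tu\|_{L^2_{t,x}(t_0,t_*)}^2\leq\frac{2}{m^2}\int_{M\cap A(b_*,b_1;\mathbb{S}^{n-1})}s^*d\psi\leq\frac{4}{m^2}c_0\epsilon\leq c_0\epsilon<\delta$. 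With \eqref{solution with 11}--\eqref{solution with 44} in hand, Lemma~\ref{Simon's estimate} gives $\sup_{t\in(t_0,t_*)}\|u(t)\|_{L^2_x}<C_*\delta^\theta=C_*(4c_0\epsilon)^\theta$, which is the assertion. The one genuine obstacle is this last computation: the tilt expansion must be controlled well enough to guarantee that the geometric constant multiplying $\|\partial_tu\|^2$ is, up to the error absorbed by $\epsilon_*$, at most $4c_0$; here the explicit factor $1/m^2\leq1/4$ supplies the needed room, since $c_0$ appears on both sides of \eqref{solution with 44} and cannot simply be absorbed by enlarging it.
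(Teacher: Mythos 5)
Your proposal is correct and follows essentially the same route as the paper: you verify hypotheses \eqref{solution with 11}--\eqref{solution with 44} of Lemma~\ref{Simon's estimate} with $\delta=4c_0\epsilon$ by combining Remark~\ref{minimal submanifolds equation}, the graph bounds \eqref{nu} and \eqref{nu_i}, Proposition~\ref{psi comass less than one} with Corollary~\ref{monotonicity} and \eqref{c_0} for the volume-gap bound, and Proposition~\ref{dpsi} with the energy estimate \eqref{energy estimate} for the $L^2$ bound on $\partial_t u$, exactly as in the paper's chain \eqref{solution with 33'}--\eqref{solution with 44'}, only with the change of variables and constants (the $1/m^2$ factor, enlarging $c_0$, shrinking $\epsilon_1$) spelled out more explicitly than the paper does. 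Note also that, like the paper's own proof, what you actually establish is the bound on $\sup_{t\in(t_0,t_*)}\|u(t)\|_{L^2_x}$, which is the form of the estimate used later in the proof of Theorem~\ref{theorem}.
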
 
\begin{proof}
By Remark~\ref{minimal submanifolds equation},
$u$ satisfies Simon's equation~\eqref{Simon's equation} with respect to $F$ and $R$.
By Proposition~\ref{psi comass less than one}, Corollary~\ref{monotonicity} and \eqref{c_0},
\begin{equation}
\label{solution with 33'}
\begin{split}
&\sup_{b\in (b_*,b_1)} \Vol(X) -\Vol \left( s\left( M\cap r^{-1}(b) \right) \right) \\
&\leq \sup_{b\in (b_*,b_1)} \int_X \psi -\int_{M\cap r^{-1}(b)}s^*\psi \\
&\leq \int_X \psi -\int_{ M\cap r^{-1}(b_0)} s^*\psi \\
&\leq c_0\epsilon.
\end{split}
\end{equation}
By calculation, if $\|\nu\|_{C^1_{\mathrm{cyl}}}\leq \epsilon_1$ for some $\epsilon_1=\epsilon_1(m,n,X)>0$, then
\begin{equation}
\label{L^2-estimate}
|r\partial_r\nu|^2 \leq 2 |\partial_r\wedge \overrightarrow{TM}|^2,
\end{equation}
where $M=G(\nu)$, and $\overrightarrow{TM}$ is as in Proposition~\ref{dpsi}.
Suppose $\epsilon_*< \epsilon_1 $.
Then, by Proposition~\ref{dpsi}, \eqref{energy estimate} and \eqref{L^2-estimate}, 
\begin{equation}
\label{solution with 44'}
\begin{split}
\int_{M} |r\partial_r\nu|^2 /r^m \dVol(M)
&\leq 2\int_{M} |\partial_r\wedge \overrightarrow{TM}|^2/r^m \dVol(M) \\
&= 2\int_{M} s^*d\psi \\
&\leq 4c_0\epsilon.
\end{split}
\end{equation}
Now, \eqref{nu}, \eqref{nu_i} with $i=1$, \eqref{solution with 33'}, \eqref{solution with 44'}
imply \eqref{solution with 11}, \eqref{solution with 22}, \eqref{solution with 33}, \eqref{solution with 44}
in Lemma~\ref{Simon's estimate},
respectively.
Therefore, by Lemma~\ref{Simon's estimate},
\[\sup_{t\in(t_0,t_*)} \| u(t) \|_{L^2_x}<C_*(4c_0\epsilon)^{\theta} \]
if $\epsilon<(4c_0)^{-1}$.
\end{proof}
Suppose $\epsilon<(4c_0)^{-1}$ and $\epsilon_*<\epsilon_1$.
Suppose $b_*\in S_*$.
Set $\tau=\log{\beta}/m$.
Then, by interpolation and Proposition~\ref{S_*},
\begin{equation}
\begin{split}
\label{interpolation}
\|\nu\|_{C^1_{\mathrm{cyl}}(b_*,b_*/\beta)}&=\| u \|_{C^1_{t,x}(t_*-\tau,t_*)}\\
&\leq \epsilon_{**}(2\epsilon_*)^{-1}\|u\|_{C^{1,1/2}_{t,x}(t_*-\tau,t_*)}+c_1\sup_{t\in (t_*-\tau,t_*)}\|u(t)\|_{L^2_x}\\
&\leq \epsilon_{**} / 2+ c_1 C_*( 4 c_0  \epsilon)^{\theta}\\
&\leq \epsilon_{**}
\end{split}
\end{equation}
if $\epsilon<\epsilon_{**}(2c_1C_*)^{-1}(4c_0\epsilon)^{-\theta}$,
where $c_1$=$c_1(m,n,X,\tau,\epsilon_{**}(2\epsilon_*)^{-1})>0$.

Suppose $\epsilon<\epsilon_{**}(2c_1C_*)^{-1}(4c_0\epsilon)^{-\theta}$.
By \eqref{interpolation}, \eqref{energy estimate} and Lemma~\ref{extension lemma},
$b_*\beta^{1/2}\in S_*$.
Therefore, $b_*$ is an interior point in $S_*$.
Thus, $S_*$ is an open subset of $[b_0,b_1\beta)$.

By \eqref{graph}, $S_*$ is a closed subset of $[b_0,b_1\beta)$.
Thus, $S_*$ is a non-empty open closed subset of $[b_0,b_1\beta)$.
Therefore, $S_*=[b_0,b_1\beta)$.
In particular, $b_0\in S_*$.
Set $t_{\infty}=-\log{b_0}/m$. Then, by \eqref{nu_i} with $i=0$ and Proposition~\ref{S_*},
\[ \sup_{t\in (t_0,t_{\infty}+\tau)}\|u(t)\|_{L^{2}_x}\leq C_*(4c_0\epsilon)^{\theta}.\]
Therefore, by the Schauder estimate in Remark~\ref{Schauder},
\[ \|u \|_{C^{1}_{t,x}(t_0,t_{\infty}+\tau)}\leq C\epsilon^{\theta}\]
for some $C=C(C_*,c_0)=C(m,n,X,\phi)>0$.
This completes the proof of Theorem~\ref{theorem}.
\section{The Main Result}
\label{main result}
Let $(x^1,y^1,x^2,y^2,x^3,y^3)$ be the coordinates on $\mathbb{R}^6$,
and $\omega_0$ the symplectic form $dx^1\wedge dy^1+dx^2\wedge dy^2+dx^3\wedge dy^3$ on $\mathbb{R}^6$.
Let $J_0$ be the complex structure on $\mathbb{R}^6$ which maps $\partial/\partial x^{\alpha}$ to $\partial/\partial y^{\alpha}$ for every $\alpha=1,2,3$,
and $\Omega_0$ the complex volume form $dz^1\wedge dz^2 \wedge dz^3$ on $\mathbb{R}^6$,
where $z^{\alpha}=x^{\alpha}+iy^{\alpha}$ for every $\alpha=1,2,3$.
Harvey and Lawson~\cite{Harvey and Lawson} prove that $\re{\Omega_0}$ is a calibration with respect to the metric $g_0=\sum_{\alpha=1}^3dz^{\alpha}d\overline{z^{\alpha}}$ on $\mathbb{R}^6$.
$\re{\Omega_0}$-submanifolds of $(\mathbb{R}^6,g_0)$ are called special Lagrangian submanifolds of
$(\mathbb{R}^6,\omega_0,J_0,\Omega_0)$;
this is well-defined since $g_0=\omega_0(\bullet,J_0\bullet)$.

Let $\mathbb{R}^3_1$, $\mathbb{R}^3_2$ be special Lagrangian planes in $(\mathbb{R}^6,\omega_0,J_0,\Omega_0)$ such that $\mathbb{R}^3_1 \oplus \mathbb{R}^3_2 =\mathbb{R}^6$.
Lawlor~\cite{Lawlor} gives an explicit construction of $L$, $f$ with the following properties:
\begin{itemize}
\item [(L1)]$L$ is a closed special Lagrangian submanifold of $(\mathbb{R}^6,\omega_0,J_0,\Omega_0)$;
\item [(L2)]$f$ is a diffeomorphism of $\mathbb{R}\times \mathbb{S}^2$ into $\mathbb{R}^6$;
\item [(L3)]$L$ is the image of $f:\mathbb{R}\times \mathbb{S}^2 \rightarrow \mathbb{R}^6$;
\item [(L4)]there exists $R>0$ such that
\[ |f_R-i_R|/r+|d(f_R-i_R)|=O(r^{-3}) \text{ with respect to the metric }g_0\text{ on }\mathbb{R}^6,\]
where $r$ is the projection of $\mathbb{R}\times \mathbb{S}^2$ onto $\mathbb{R}$, $f_R$ is the restriction of $f$ to $(\mathbb{R}\setminus[-R,R])\times \mathbb{S}^2$,
and $i_R$ is the inclusion of $(\mathbb{R}\setminus[-R,R])\times \mathbb{S}^2$ into $\mathbb{R}^6$ whose image
is $\mathbb{R}^3_1\cup \mathbb{R}^3_2\setminus B(R)$. 
\end{itemize}
Here, $B(R)$ is the ball of radius $R$ centred at $0$ in $(\mathbb{R}^6,g_0)$.

Let $\mathbb{T}^6$ be the torus $\mathbb{R}^6/\mathbb{Z}^6$.
By abuse of notation, $\omega_0$, $J_0$, $g_0$, $\Omega_0$ denote the symplectic form, the complex structure,
the metric, the complex volume form, respectively on $\mathbb{T}^6$ as well as on $\mathbb{R}^6$.
Likewise, $B(R)$ denotes the ball of radius $R$ centred at $0$ in $(\mathbb{T}^6,g_0)$ as well as in $(\mathbb{R}^6,g_0)$.
Set $\mathbb{T}^3_1=\mathbb{R}^3_1/(\mathbb{R}^3_1\cap \mathbb{Z}^6)$, $\mathbb{T}^3_2=\mathbb{R}^3_2/(\mathbb{R}^3_2\cap \mathbb{Z}^6)$.
$\mathbb{T}^3_1$, $\mathbb{T}^3_2$ are special Lagrangian submanifolds of $(\mathbb{T}^6,\omega_0,J_0,\Omega_0)$.

In view of the proof of the theorem of D. Lee~\cite[Theorem~3]{D. Lee} or Joyce~\cite[Theorem~9.10]{Joyce},
for every $\delta>0$ there exist $(J_{\delta},\Omega_{\delta})$, $M_{\delta}$, $\epsilon_{\delta}$, $b_{\delta}$
and $b_{\delta}'$ with the following properties:
\begin{itemize}
\item [(P1)]
$J_{\delta}=f_{\delta}^*J_0$, $\Omega_{\delta}=f_{\delta}^*\Omega_0$
for some $f_{\delta}\in GL(3,\mathbb{R})$ such that $f_{\delta}^*\omega_0=\omega_0$;
\item [(P2)]
$M_{\delta}$ is a compact special Lagrangian submanifold of
$(\mathbb{T}^3,\omega_0,J_{\delta},\Omega_{\delta})$, i.e.,
a $\re\Omega_{\delta}$-submanifold of $(\mathbb{T}^6,g_{\delta})$, where $g_{\delta}=f_{\delta}^*g_0$;
\item [(P3)]
$\epsilon_{\delta}>0$, $0<b_{\delta}<b_{\delta}'<1$,
$\lim_{\delta \to 0}\epsilon_{\delta}=0$, $\lim_{\delta \to 0}b_{\delta}/{\delta}=\infty$;
\item [(P4)]
$M_{\delta}\cap B(b_{\delta} )$ is the graph of some normal vector field on ${\delta}L\cap B(b_{\delta})$
in $B(b_{\delta})$ with $C^1$-norm less than $b_{\delta}\epsilon_{\delta}$,
where ${\delta}L \cap B(b_{\delta})$ is embedded in $\mathbb{T}^6$;
\item [(P5)]
$M_{\delta}\setminus \overline{B(b_{\delta}' )}$ is the graph of some normal vector field on $(\mathbb{T}^3_1\cup \mathbb{T}^3_2)\setminus \overline{B(b_{\delta}')}$ in
$(\mathbb{T}^6\setminus \overline{B(b_{\delta}')},g_{\delta})$ with $C^1$-norm less than
$b_{\delta}\epsilon_{\delta}$;
\item [(P6)]
there exists a normal vector field $\nu$ on $(\mathbb{T}^3_1\cup \mathbb{T}^3_2)\cap A(b,b')$
in $(A(b,b'),g_{\delta})$ such that $M_{\delta}\cap A(b,b')=G(\nu)$ with $\|\nu\|_{C^1_{\mathrm{cyl}}}\leq \epsilon_{\delta}$ for some $b$, $b'$ with $0<b<b_{\delta}<b_{\delta}'<b'<1$
in the notation of Section~\ref{statement},
where $A(b,b')=B(b')\setminus \overline{B(b)}$;
\item [(P7)]
$M_{\delta}$ is diffeomorphic to the connected sum $\mathbb{T}^3_1 \# \mathbb{T}^3_2$.
\end{itemize}
The main result of this paper is the following:
\begin{theorem}
\label{uniqueness}
Let $(J_{\delta},\Omega_{\delta})$, $M_{\delta}$, $\epsilon_{\delta}$, $b_{\delta}$ and $b_{\delta}'$ be as above.
Let $M_{\delta}'$ be such that:
\begin{itemize}
\item[\upshape{(P2')}]
$M_{\delta}'$ is a compact special Lagrangian submanifold of
$(\mathbb{T}^6,\omega_0,J_{\delta},\Omega_{\delta})$;
\item[\upshape{(P4')}]
$M_{\delta}'\cap B(b_{\delta})$ is the graph of some normal vector field on ${\delta}L\cap B(b_{\delta})$
in $(B(b_{\delta}),g_{\delta})$ with $C^1$-norm less than $b_{\delta}\epsilon_{\delta}$;
\item[\upshape{(P5')}]
$M_{\delta}'\setminus \overline{B(b_{\delta}')}$ is the graph of some normal vector field on $(\mathbb{T}^3_1\cup \mathbb{T}^3_2)\setminus \overline{B(b_{\delta}')}$ in
$(\mathbb{T}^6\setminus \overline{B(b_{\delta}')},g_{\delta})$ with $C^1$-norm less than $b_{\delta}\epsilon_{\delta}$.
\end{itemize}
Then, $M_{\delta}'=M_{\delta}+t$ for some $t\in \mathbb{T}^6$ whenever $\delta$ is sufficiently small.
\end{theorem}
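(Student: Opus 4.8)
The plan is to argue in two stages: first to use Theorem~\ref{theorem} together with the Lawlor asymptotics~(L4) to present $M'_\delta$ as a normal graph over $M_\delta$ which is small in a weighted $C^1$-norm adapted to the neck, with smallness tending to $0$ as $\delta\to0$; and then to invoke the deformation theory of the compact special Lagrangian $M_\delta$ to conclude that such a graph must be a translation. Throughout, $m=3$, $n=6$, the calibration is $\phi=\re\Omega_\delta$ (a parallel calibration of degree~$3$ on $\mathbb{R}^6$, identified via $f_\delta$ with $\re\Omega_0$ on the standard $(\mathbb{R}^6,g_0)$), the relevant cone is $\mathbb{R}^3_1\cup\mathbb{R}^3_2$, and its link $X=\mathbb{S}^2_1\sqcup\mathbb{S}^2_2\subset\mathbb{S}^5$ is a compact $\psi$-submanifold for $\psi=(\partial_r\lrcorner\re\Omega_\delta)|_{\mathbb{S}^5}$: indeed $\mathbb{R}^3_1,\mathbb{R}^3_2$ are $\phi$-planes, so $\mathbb{R}^3_1\cup\mathbb{R}^3_2\setminus\{0\}$ is a $\phi$-cone and Remark~\ref{psi submanifolds} applies. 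For $\delta$ small the annulus around $0$ used below lifts isometrically to a Euclidean annulus, so Theorem~\ref{theorem} is available there.

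For the first stage I would feed Theorem~\ref{theorem} the two annular ends $A(b_\delta\beta,b_\delta;X)$ and $A(b'_\delta,b'_\delta/\beta;X)$. On the outer end, (P5') directly gives $M'_\delta\cap A(b'_\delta,b'_\delta/\beta;\mathbb{S}^5)=G(\nu_1)$ with $\|\nu_1\|_{C^1_{\mathrm{cyl}}}\leq c\,\epsilon_\delta$. On the inner end, (P4') presents $M'_\delta$ as a graph over $\delta L$ with $C^1$-norm $\leq b_\delta\epsilon_\delta$; and by~(L4), at radii $\asymp b_\delta$ the neck $\delta L$ differs from $\mathbb{R}^3_1\cup\mathbb{R}^3_2$ by $O((b_\delta/\delta)^{-3})$ in $C^1_{\mathrm{cyl}}$, which tends to $0$ since $b_\delta/\delta\to\infty$ by~(P3), so $M'_\delta\cap A(b_\delta\beta,b_\delta;\mathbb{S}^5)=G(\nu_0)$ with $\|\nu_0\|_{C^1_{\mathrm{cyl}}}=o(1)$. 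Theorem~\ref{theorem}, applied with $b_0=b_\delta$ and $b_1=b'_\delta/\beta$, then gives $M'_\delta\cap A(b_\delta\beta,b'_\delta/\beta;\mathbb{S}^5)=G(\nu')$ with $\|\nu'\|_{C^1_{\mathrm{cyl}}}\leq C\eta_\delta^{\theta}$, where $\eta_\delta\to0$ bounds the two inputs. Since $M_\delta$ has the analogous description by~(P4),(P5),(P6), I would then subtract the two sets of presentations — as graphs over $\delta L$ inside $B(b_\delta)$, over $X$ on the middle annulus, and over $\mathbb{T}^3_1\cup\mathbb{T}^3_2$ outside $B(b'_\delta)$, all mutually $C^1_{\mathrm{cyl}}$-close on their overlaps — to obtain, for $\delta$ small, $M'_\delta=\graph_{M_\delta}(\sigma)$ for a normal vector field $\sigma$ on $M_\delta$ with $\|\sigma\|\to0$ as $\delta\to0$ in the weighted $C^1$-norm of the gluing constructions of D.~Lee~\cite{D. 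Lee} and Joyce~\cite{Joyce}, under which the neck of $M_\delta$ is a bounded rescaling of the fixed Lawlor neck $L$.

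For the second stage: both $M_\delta$ and $M'_\delta$ are compact special Lagrangian submanifolds of the flat Calabi--Yau $(\mathbb{T}^6,\omega_0,J_\delta,\Omega_\delta)$, so by McLean's theorem the special Lagrangians $C^1$-close to $M_\delta$ form a smooth manifold of dimension $b^1(M_\delta)$, to which $M'_\delta$ belongs. By~(P7), $M_\delta\cong\mathbb{T}^3_1\#\mathbb{T}^3_2$, and $H_1$ of a connected sum of two $3$-tori is $\mathbb{Z}^3\oplus\mathbb{Z}^3$, so $b^1(M_\delta)=6=\dim\mathbb{T}^6$. The translations $t\mapsto M_\delta+t$ give a $6$-dimensional family of special Lagrangians through $M_\delta$; the orbit map $\mathbb{T}^6\to\mathcal{M}$ is an injective immersion — no nonzero parallel vector field is everywhere tangent to $M_\delta$, since near $0$ it is a small normal graph over $\delta L$ and $L$ (asymptotic to $\mathbb{R}^3_1\cup\mathbb{R}^3_2$, with $\mathbb{R}^3_1\oplus\mathbb{R}^3_2=\mathbb{R}^6$) carries no such field — hence an open embedding by dimension count, so the orbit, being also compact, is a whole component of $\mathcal{M}$. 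Finally, the linearisation of $M_\delta$ (the Hodge Laplacian on $1$-forms) has, in the weighted norms of the construction, a bounded right inverse transverse to its $6$-dimensional kernel with bound uniform in $\delta$, so this component has $\delta$-independent size in those norms; since $\|\sigma\|\to0$, for $\delta$ small it contains $M'_\delta$, and therefore $M'_\delta=M_\delta+t$ for some $t\in\mathbb{T}^6$.

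The hard part will be the first stage's bookkeeping: converting the annular, scale-invariant conclusion of Theorem~\ref{theorem} and the $O(r^{-3})$ decay of~(L4) into a single global estimate for $\sigma$ in a norm under which the deformation theory of $M_\delta$ is uniform in $\delta$ — matching the several descriptions of $M'_\delta$ on the overlap regions and confirming that $\|\sigma\|$ tends to $0$ fast enough to beat the rate at which the relevant neighbourhood of $M_\delta$ shrinks. A lesser point is that the rescaled limit of $M'_\delta$ near the waist must be exactly $L$, not another special Lagrangian asymptotic to $\mathbb{R}^3_1\cup\mathbb{R}^3_2$, which is forced by the smallness in~(P4') and the explicit nature of Lawlor's family. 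It is precisely the exponent gain $\epsilon\mapsto C\epsilon^{\theta}$, $\theta\in(0,1/2)$, in Theorem~\ref{theorem} — a \L{}ojasiewicz--Simon effect — that guarantees these errors vanish as $\delta\to0$, which the rigidity step requires.
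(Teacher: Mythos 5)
Your proposal follows essentially the same route as the paper: your first stage is the paper's Proposition~\ref{uniqueness1} (feeding the inner end, via (P4$'$), (L4) and (P3), and the outer end, via (P5$'$), into Theorem~\ref{theorem} to conclude that $M_\delta'$ is $C^1$-close to $M_\delta$), and your second stage is Proposition~\ref{uniqueness2} (McLean's deformation theory, $b^1(M_\delta)=6$ by (P7), and the six-parameter translation family $t\mapsto M_\delta+t$ being an immersion, hence locally onto $\mathcal{D}(M_\delta)$ by dimension count and the inverse function theorem). Your extra remarks on uniformity in $\delta$ of the deformation neighbourhood are more explicit than the paper, which simply combines the two propositions, but they do not change the approach.
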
 
The proof of Theorem~\ref{uniqueness} is divided into the following two propositions:
\begin{proposition}
\label{uniqueness1}
$M_{\delta}'$ is sufficiently close to $M_{\delta}$ in the $C^1$-topology induced by the metric $g_{\delta}$ on $\mathbb{T}^6$ whenever $\delta$ is sufficiently small. 
\end{proposition}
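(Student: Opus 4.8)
The plan is to argue by contradiction using the structure theory of special Lagrangian currents together with the main gluing-region estimate, Theorem~\ref{theorem}. Suppose the conclusion fails: then there is a sequence $\delta_j\to 0$ along which $M_{\delta_j}'$ stays bounded away from $M_{\delta_j}$ in the $C^1$-topology. The first step is to rescale: look at $\delta_j^{-1}M_{\delta_j}'$ inside $\delta_j^{-1}\mathbb{T}^6$, which converges (after passing to a subsequence, via the volume bound coming from (P4$'$) and (P5$'$) plus the monotonicity formula) to a special Lagrangian integral current $M_\infty'$ in $\mathbb{R}^6$ with respect to $(\omega_0,J_0,\Omega_0)$; by (P4$'$) it agrees with $L$ on a fixed ball, and by (L4) and (P5$'$) it is asymptotic to $\mathbb{R}^3_1\cup\mathbb{R}^3_2$ at infinity with the same rate. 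One then invokes the uniqueness for the Lawlor neck as a special Lagrangian submanifold of $\mathbb{R}^6$ asymptotic to the given pair of planes — this is where one uses that $L$ is the \emph{unique} such object up to the finite-dimensional family of deformations (scalings and translations), which is the genuine input from the Lawlor/Joyce side — to conclude $M_\infty'$ is a translate and rescale of $L$, hence equals $L$ after adjusting the scale. That already forces $M_{\delta_j}'$ to be $C^1$-close to $\delta_j L$ near $0$ on regions of size $\gg\delta_j$.

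The second step is to transfer this into the neck/annular region and conclude global $C^1$-closeness to $M_{\delta_j}$. Away from $0$, i.e.\ on $\mathbb{T}^6\setminus\overline{B(b_{\delta_j}')}$, both $M_{\delta_j}$ and $M_{\delta_j}'$ are graphs over $\mathbb{T}^3_1\cup\mathbb{T}^3_2$ with small $C^1$-norm by (P5) and (P5$'$), so there they are automatically close. The content is in the annulus $A(b,b')$ joining the two regimes. Here I would apply Theorem~\ref{theorem} with $X$ taken to be the link of $\mathbb{R}^3_1\cup\mathbb{R}^3_2$ in $\mathbb{S}^5$ (a disjoint union of two totally geodesic $\mathbb{S}^2$'s, which is a compact $\psi$-submanifold for $\psi$ associated to $\phi=\re\Omega_0$), $\beta$ a fixed constant $<1$, and $b_0,b_1$ scaled so that $[b_0\beta,b_1]$ covers $A(b,b')$. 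The hypotheses of Theorem~\ref{theorem} are supplied at the two ends: at the outer end by (P5$'$), and at the inner end by the first step, which says $M_{\delta_j}'$ restricted to a dyadic annulus just outside $\delta_j L$'s asymptotic region is a small graph over the cone $\mathbb{R}^3_1\cup\mathbb{R}^3_2$. Theorem~\ref{theorem} then produces a single normal vector field $\nu'$ with $M_{\delta_j}'\cap A(b_0\beta,b_1)=G(\nu')$ and $\|\nu'\|_{C^1_{\mathrm{cyl}}}\le C\epsilon^\theta$, where $\epsilon\to 0$ as $\delta_j\to 0$; since by (P6) $M_{\delta_j}\cap A(b_0\beta,b_1)=G(\nu)$ with $\|\nu\|_{C^1_{\mathrm{cyl}}}\le\epsilon_{\delta_j}\to 0$, the two graphs are $C^1$-close in the annulus as well.

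Patching the three regions — the inner ball where both are close to $\delta_j L$, the neck annulus controlled by Theorem~\ref{theorem}, and the outer region where both are small graphs over the planes — gives that $M_{\delta_j}'$ is $C^1_{g_{\delta_j}}$-close to $M_{\delta_j}$ globally on $\mathbb{T}^6$, contradicting the standing assumption. This establishes Proposition~\ref{uniqueness1}.

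I expect the main obstacle to be the first step: extracting the limit current $M_\infty'$ and identifying it with the Lawlor neck. The compactness requires a uniform mass bound on $\delta_j^{-1}M_{\delta_j}'$ on compact sets of $\mathbb{R}^6$, which one gets from the monotonicity formula seeded by (P4$'$) and a comparison with $L$'s density at infinity; and the identification requires knowing that a special Lagrangian integral current in $\mathbb{R}^6$ asymptotic to $\mathbb{R}^3_1\cup\mathbb{R}^3_2$ with the decay rate $O(r^{-3})$ must coincide with $L$ up to the scaling-translation family — essentially a Liouville-type uniqueness statement for the Lawlor neck, for which one cites the construction of \cite{Lawlor} together with the linearized analysis in \cite{D. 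Lee} or \cite{Joyce} showing the moduli space is exactly this finite-dimensional family. One also must be careful that the ambient structures $(J_{\delta_j},\Omega_{\delta_j})$ converge to $(J_0,\Omega_0)$, which is immediate from (P1) since $f_{\delta}\to\mathrm{id}$ in $GL(3,\mathbb{R})$, so the limit current is special Lagrangian for the \emph{flat} structure and the rescaled pictures are genuinely modeled on $\mathbb{R}^6$.
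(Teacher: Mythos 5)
Your second step is essentially the paper's proof: the paper proves Proposition~\ref{uniqueness1} precisely by checking that $M_{\delta}'$ satisfies the two-ended hypotheses of Theorem~\ref{theorem} on an annulus $A(b_0\beta,b_1)$ with $b_0<b_{\delta}<b_{\delta}'<b_1\beta$, applying Theorem~\ref{theorem} to get a single small graph $\nu'$ over the cone there, and then patching with (P4), (P5), (P6), (P4$'$), (P5$'$) in the inner ball and outer region. So that part of your plan is sound and is the intended route.

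The genuine gap is your first step. You invoke a Liouville-type uniqueness statement: that a special Lagrangian integral current in $\mathbb{R}^6$ asymptotic to $\mathbb{R}^3_1\cup\mathbb{R}^3_2$ with rate $O(r^{-3})$ must be the Lawlor neck up to scaling and translation. None of \cite{Lawlor}, \cite{D. Lee}, \cite{Joyce} proves this; the linearized/deformation analysis there only describes the moduli space locally near $L$, which says nothing about a current that is not already known to be close to $L$. Such a global uniqueness result is of the same nature as (in fact stronger than) the theorem this paper is building towards, so it cannot be assumed as input. There is also a quantitative snag in the blow-up itself: (P4$'$) bounds the graph norm over $\delta L$ by $b_{\delta}\epsilon_{\delta}$, and after rescaling by $\delta^{-1}$ this becomes $b_{\delta}\epsilon_{\delta}/\delta$, which is not controlled since $b_{\delta}/\delta\to\infty$; so even the claim that the rescaled limit ``agrees with $L$ on a fixed ball'' does not follow as stated. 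Fortunately the entire step is unnecessary: (P4$'$) is a \emph{hypothesis} asserting that $M_{\delta}'$ is a $C^1$-small graph over $\delta L$ throughout $B(b_{\delta})$, so nothing needs to be proved near the neck. Taking $b_0$ comparable to $b_{\delta}$, the decay (L4) together with $b_{\delta}/\delta\to\infty$ from (P3) makes $\delta L$ itself a small graph over the cone on $A(b_0\beta,b_0)$, and composing with (P4$'$) gives exactly the inner-end hypothesis of Theorem~\ref{theorem} in the $C^1_{\mathrm{cyl}}$ norm; the outer end comes from (P5$'$) as you say. Replacing your blow-up argument by this direct observation turns your proposal into the paper's proof; the contradiction/subsequence framing can then be dropped as well.
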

\begin{proposition}
\label{uniqueness2}
If a special Lagrangian submanifold $M_{\delta}''$ of $(\mathbb{T}^6,\omega_0,J_{\delta},\Omega_{\delta})$ is sufficiently close to $M_{\delta}$ in the $C^1$-topology induced by the metric $g_{\delta}$ on $\mathbb{T}^6$, then
$M_{\delta}''=M_{\delta}+t$ for some $t\in \mathbb{T}^6$ whenever $\delta$ is sufficiently small.
\end{proposition}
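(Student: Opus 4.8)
\textbf{Proof proposal for Proposition~\ref{uniqueness2}.}
The plan is to run a standard implicit-function-theorem / deformation argument adapted to special Lagrangian geometry, exploiting that $M_\delta$ is diffeomorphic to the connected sum $\mathbb{T}^3_1\#\mathbb{T}^3_2$ (property (P7)), whose first Betti number is $b_1(M_\delta)=b_1(\mathbb{T}^3_1)+b_1(\mathbb{T}^3_2)-1=3+3-1=5$ once one accounts for the connected-sum relation, and that McLean's theorem identifies the deformation space of a compact special Lagrangian submanifold with the space of harmonic $1$-forms, hence with $H^1(M_\delta;\mathbb{R})$. First I would write $M_\delta''$, which by hypothesis is $C^1$-close to $M_\delta$, as the graph of a normal vector field $\xi$ on $M_\delta$; via the Lagrangian identification $\xi\leftrightarrow\alpha_\xi=\xi\lrcorner\omega_0$ this becomes a $1$-form on $M_\delta$ with small $C^1$-norm. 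The special Lagrangian condition $(\omega_0|_{M_\delta''}=0$ and $\mathrm{Im}\,\Omega_\delta|_{M_\delta''}=0)$ becomes, as in McLean and in Joyce~\cite{Joyce}, a nonlinear elliptic system whose linearization at $\xi=0$ is $(d^*\alpha_\xi, d\alpha_\xi)$, i.e.\ controlled by the Hodge Laplacian; the nonlinear remainder is quadratic in $\xi$ with the relevant Schauder estimates.

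Next I would set up the argument so that solutions correspond to honest special Lagrangian deformations. The key point is that the moduli space of compact special Lagrangian submanifolds near $M_\delta$ is a smooth manifold of dimension $b_1(M_\delta)$, and — crucially for this gluing situation — one must show that \emph{every} nearby special Lagrangian $M_\delta''$ actually lies in this moduli space (this is where $C^1$-closeness is used to run the inverse function theorem uniformly in $\delta$). The constants in the Schauder and inverse-function estimates must be controlled uniformly as $\delta\to 0$: the metric $g_\delta$ degenerates near the neck region $A(b_\delta,b_\delta')$ where $M_\delta$ looks like the rescaled Lawlor neck $\delta L$, so the naive elliptic estimates blow up. I would handle this by a rescaling/patching argument: on the neck one rescales by $\delta^{-1}$ so that $M_\delta$ becomes the fixed asymptotically-conical $L$, applies the known analysis there (the decay rate $O(r^{-3})$ from (L4) gives the needed weighted estimates with no obstructions because $3$ is a non-indicial weight), while away from the neck $M_\delta$ is $C^1$-close to the fixed flat tori $\mathbb{T}^3_1\cup\mathbb{T}^3_2$ where the estimates are trivially uniform; a partition of unity glues the two regimes.

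Once $M_\delta''$ is known to lie in the $b_1(M_\delta)$-dimensional moduli space, I would identify which deformations of $M_\delta$ within $\mathbb{T}^6$ are realized by translations. The torus $\mathbb{T}^6$ acts on itself by translation, and translating $M_\delta$ gives a $6$-parameter family — but only those translation directions tangent to (or rather, transverse-but-preserving the homology class of) $M_\delta$ produce genuinely new submanifolds; the stabilizer and the relation to harmonic $1$-forms coming from the ambient flat structure need to be computed. The claim I would aim to prove is that the moduli space near $M_\delta$ is \emph{exactly} the orbit $\{M_\delta+t : t\in\mathbb{T}^6\}$ (possibly modulo a finite group), i.e.\ all infinitesimal special Lagrangian deformations of $M_\delta$ are ambient translations; this is plausible because the harmonic $1$-forms on $M_\delta\approx\mathbb{T}^3_1\#\mathbb{T}^3_2$ with respect to $g_\delta$ all arise, in the limit $\delta\to0$, from the parallel $1$-forms on the flat pieces, which are precisely the translational ones. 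Hence $M_\delta''$, lying in this orbit and being $C^1$-close to $M_\delta$, equals $M_\delta+t$ for some small $t$.

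\textbf{Main obstacle.} The hard part will be the uniform-in-$\delta$ analysis across the degenerating neck: establishing that the inverse function theorem applies with $\delta$-independent radius, which requires weighted Schauder estimates on $M_\delta$ with a weight adapted to the conical neck, and verifying there are no small eigenvalues of the relevant Laplacian (equivalently, no indicial roots obstructing the decay) — essentially re-deriving the linear gluing estimates that underlie the original construction of $M_\delta$, but now as \emph{a priori} bounds rather than as part of an existence scheme. A secondary subtlety is pinning down the exact dimension and structure of the moduli space (the connected-sum Betti number bookkeeping and the claim that all harmonic $1$-forms are ``translational'') so that the final identification $M_\delta''=M_\delta+t$ is forced rather than merely dimension-consistent.
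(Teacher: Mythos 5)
Your plan has the right general shape (McLean deformation theory plus an identification of nearby special Lagrangians with ambient translates), but it contains a concrete error that breaks the decisive step, and it misplaces the difficulty. The error is the Betti number: you compute $b_1(M_\delta)=b_1(\mathbb{T}^3_1)+b_1(\mathbb{T}^3_2)-1=5$. For connected sums of $3$-manifolds there is no ``$-1$'': by Mayer--Vietoris along the gluing sphere $\mathbb{S}^2$ (which is simply connected), $H^1(\mathbb{T}^3_1\#\mathbb{T}^3_2)\cong H^1(\mathbb{T}^3_1)\oplus H^1(\mathbb{T}^3_2)$, so $b_1(M_\delta)=6$. This is exactly what the paper's proof hinges on: by McLean~\cite{Mclean} the moduli space $\mathcal{D}(M_\delta)$ of nearby special Lagrangians is a smooth manifold of dimension $b_1(M_\delta)=6$; the translation map $f:t\mapsto M_\delta+t$ from $\mathbb{T}^6$ into $\mathcal{D}(M_\delta)$ has injective differential at $0$ for small $\delta$ (by (P5): away from the neck $M_\delta$ is a graph over $\mathbb{T}^3_1\cup\mathbb{T}^3_2$ and $\mathbb{R}^3_1\oplus\mathbb{R}^3_2=\mathbb{R}^6$, so no nonzero constant vector is everywhere nearly tangent to $M_\delta$); since the dimensions agree ($6=6$), the inverse function theorem makes $f$ a local diffeomorphism onto a neighbourhood of $M_\delta$ in $\mathcal{D}(M_\delta)$, and every sufficiently close special Lagrangian $M_\delta''$ is therefore a translate. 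With your count of $5$ this argument cannot even be stated consistently: the translation orbit is locally $6$-dimensional by the injectivity of $df_0$, so it could not sit inside a $5$-dimensional moduli space, and your claim that the moduli space ``is exactly the orbit'' would be self-contradictory.

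The second issue is that the bulk of your proposal --- uniform-in-$\delta$ weighted Schauder estimates across the degenerating neck, absence of small eigenvalues, rescaling to the Lawlor neck, and a limiting analysis of harmonic $1$-forms as $\delta\to0$ --- is not needed for Proposition~\ref{uniqueness2} as the paper proves it. The statement only requires, for each sufficiently small fixed $\delta$, a (possibly $\delta$-dependent) $C^1$-neighbourhood of $M_\delta$ in which uniqueness up to translation holds; McLean's theorem is applied at fixed $\delta$, and the only quantitative input in $\delta$ is the injectivity of $df_0$ via (P5). The uniform-in-$\delta$ closeness is the content of Proposition~\ref{uniqueness1}, which is handled by Theorem~\ref{theorem}, not here. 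You also never actually verify the injectivity of $df_0$ (you only allude to ``stabilizers''), which is the one genuinely $\delta$-dependent point the paper does check. So: fix the Betti number to $6$, add the injectivity of the differential of the translation map, and replace the heavy neck analysis by the equal-dimension inverse function theorem argument.
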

\begin{proof}[Proof of {Proposition~$\ref{uniqueness1}$}]
By (L4), (P3), (P4) and (P5), there exist $b_0$, $b_1$, $\beta$, $\nu_0$ and $\nu_1$ such that:
\begin{itemize}
\item $b_0>0$, $b_1>0$, $0<\beta <1$, $b_0<b_{\delta}<b_{\delta}'<b_1\beta$;
\item for each $i=0,1$, $\nu_i$ is a normal vector field on
$(\mathbb{T}^3_1\cup \mathbb{T}^3_2)\cap A(b_i\beta,b_i)$
in $A(b_i\beta,b_i)$;
\item $M_{\delta}\cap A(b_i\beta,b_i)=G(\nu_i)$ with $\|\nu_i\|_{C^1_{\mathrm{cyl}}}\leq \epsilon/2$,
for each $i=0,1$
in the notation of Section~$\ref{statement}$ whenever $\delta$ is sufficiently small;
\end{itemize}
here $A(b,b')=B(b')\setminus \overline{B(b)}$ for each $b$, $b'$ with $0<b<b'<1$,
and $\epsilon>0$ is as in Theorem~$\ref{theorem}$.
Therefore, by (P4') and (P5'),
there exist normal vector fields $\nu_i'$ on
$((\mathbb{T}^3_1\cup \mathbb{T}^3_2)\cap A(b_i\beta, b_i),g_{\delta})$ in $A(b_i\beta ,b_i)$
for each $i=0,1$ such that
\begin{equation*}
M_{\delta}'\cap A(\beta b_i,b_i)=G(\nu_i')\text{ with }\|\nu_i'\|_{C^1_{\mathrm{cyl}}}\leq \epsilon
\text{ for each }i=0,1
\end{equation*}
whenever $\delta$ is sufficiently small.
By (P1) and (P2'), therefore, $M_{\delta}'\cap A(b_0\beta,b_1)$ satisfies the assumption of Theorem~\ref{theorem}.
By Theorem~\ref{theorem}, therefore, there exists a normal vector field $\nu'$ on $(\mathbb{T}^3_1\cup \mathbb{T}^3_2) \cap A(b_0\beta,b_1)$ in $(A(b_0\beta, b_1),g_{\delta})$ such that
$M_{\delta}'\cap A(b_0\beta,b_1)=G(\nu')$ with $\|\nu'\|_{C^1_{\mathrm{cyl}}}\leq \epsilon_{\delta}'$
for some $\epsilon_{\delta}'$ converging to $0$ as $\delta\to 0$.
This, together with (P4), (P5), (P6), (P4') and (P5'), proves Proposition~\ref{uniqueness1}.
\end{proof}
\begin{proof}[Proof of {Proposition~$\ref{uniqueness2}$}]
Let $\mathcal{D}(M_{\delta})$ be the space of all special Lagrangian submanifolds of $(\mathbb{T}^6,\omega_0,J_{\delta},\Omega_{\delta})$ which are sufficiently close to $M_{\delta}$ in the $C^1$-topology induced by $g_{\delta}$ on $\mathbb{T}^6$.
By the deformation theory of Mclean~\cite[Section~3]{Mclean}, $\mathcal{D}(M_{\delta})$ is a manifold of dimension
$b^1(M_{\delta})$, where $b^1(M_{\delta})$ is the first Betti number of $M_{\delta}$.
By (P7), $b^1(M_{\delta})=6$.
Thus,
\begin{equation}\label{dimension6} \dim \mathcal{D}(M_{\delta})=6. \end{equation}
For each $t\in \mathbb{T}^6$, by (P1), $M_{\delta}+t$ is a special Lagrangian submanifold of $(\mathbb{T}^6,\omega_0,J_{\delta},\Omega_{\delta})$.
Therefore, $f:t\mapsto M_{\delta}+t$ maps a neighbourhood of $0\in\mathbb{T}^6$ into $\mathcal{D}(M_{\delta})$.
By (P5), if $\delta$ is sufficiently small,
then $df_0:T_0\mathbb{T}^6\rightarrow T_{f(0)}\mathcal{D}(M_{\delta})$ is one-to-one.
Therefore, by \eqref{dimension6} and Inverse Function Theorem, $f$ is a diffeomorphism of a neighbourhood of $0\in\mathbb{T}^6$ onto a neighbourhood of $f(0)=M_{\delta}$ in
$\mathcal{D}(M_{\delta})$.
This completes the proof of Proposition~\ref{uniqueness2}.
\end{proof}
\begin{proof}[Proof of {Theorem~$\ref{uniqueness}$}]
Proposition~\ref{uniqueness1} and Proposition~\ref{uniqueness2} imply Theorem~\ref{uniqueness}.
\end{proof}
\begin{remark}
The key step to the proof of Theorem~\ref{uniqueness} is
the proof of Proposition~\ref{uniqueness1}, where we have made a direct use of Theorem~\ref{theorem},
which assumes that the ambient space is a flat Euclidean space.
This is why we considered the flat torus $(\mathbb{T}^6,g_{\delta})$.
It is very likely, however, that modification of Theorem~\ref{theorem} leads to an extension of
Theorem~\ref{uniqueness} to more general Calabi--Yau manifolds.
The author plans to work it out in the near future.
\end{remark}


\begin{thebibliography}{999}
\bibitem{Allard}
{W.K. Allard},
{\em On the First Variation of a Varifold},
{The Annals of Mathematics, Second series, Vol.95, No.3 (May, 1972), pp. 417--491.}

\bibitem{Butscher}
{A. Butscher},
{\em Regularizing a singular special Lagrangian variety},
{Communications in Analysis and Geometry Vol.12, No.4, 733--791, 2004}.

\bibitem{Harvey and Lawson}
{R. Harvey and  H.B. Lawson},
{\em Calibrated geometries},
{Acta Mathematica 
\textbf{148} (1982), 47--157.}

\bibitem{Hofer}
{H. Hofer},
{\em Pseudoholomorphic curves in symplectizations with applications to the Weinstein conjecture in dimension
three},
{Invent. math.
\textbf{114} (1993), 515-563.}

\bibitem{Joyce}
{D.D. Joyce},
{\em Special Lagrangian submanifolds with isolated conical singularities V. Survey and applications},
{Journal of Differential Geometry \textbf{63} (2003), 279-347.}

\bibitem{Lawlor}
{G. Lawlor},
{\em The angle criterion},
{Invent. math. \textbf{95} (1989), 437-446.}

\bibitem{D. Lee}
{D.A. Lee},
{\em Connected Sums of Special Lagrangian Submanifolds},
{Communications in Analysis and Geometry, Vol.12, No.3, 553-579, 2004.}

\bibitem{Y. Lee}
{Y. Lee},
{\em Embedded Special Lagrangian Submanifolds in Calabi-Yau Manifolds},
{Communications in Analysis and Geometry, Vol.11, No.3, 391-423, 2003.}

\bibitem{Mclean}
{R.C. Mclean},
{\em Deformations of Calibrated Submanifolds},
{Communications in Analysis and Geometry, Vol.6, No.4, 705-747, 1998.}

\bibitem{Morrey}
{C.B. Morrey},
{\em Multiple Integrals in the Calculus of Variations},
{Springer, 1966.}

\bibitem{Simon}
{L. Simon},
{\em Asymptotics for a class of non-linear evolution equations, with applications to geometric problems},
{The Annals of Mathematics, Second Series, Vol.118, No.3 (Nov, 1983)}, 529--571.

\bibitem{Simon2}
{L. Simon},
{\em Isolated singularities of extrema of geometric variational problems},
{In E. Giusti (Ed.)}, {\em Harmonic Mappings and Minimal Immersions}, {Montecanini, Italy 1984, Springer.}

\end{thebibliography}
\end{document}